\documentclass[12pt,reqno, a4paper]{amsart}
\usepackage[english]{babel}
\usepackage{amsthm}
\usepackage[T1]{fontenc}
\usepackage[utf8]{inputenc}
\usepackage[english]{babel}
\usepackage{graphicx, color}
\usepackage{amssymb}
\usepackage{amsmath, xfrac}
\usepackage{latexsym}
\usepackage{caption, subcaption}
\usepackage{lineno}
\usepackage{comment}
\usepackage{mathtools}  
\usepackage{enumitem} 
\usepackage{hyperref}
\usepackage{cite}
\usepackage{kpfonts}
\usepackage{multirow}
\usepackage[foot]{amsaddr}
\usepackage{float}

\newcounter{contSect} \numberwithin{contSect}{section}
 \numberwithin{contSub}{subsection}

\newtheorem{theorem}[contSect]{Theorem}

\newtheorem{lemma}[contSect]{Lemma}
\newtheorem{proposition}[contSect]{Proposition}

\newtheorem{claim}[contSect]{Claim}

\newtheorem{problem}{Problem}

\usepackage[letterpaper,top=2cm,bottom=2cm,left=2.5cm,right=2.5cm,marginparwidth=1.75cm]{geometry}

\title{There are many 5-holes}
\author{Omar Astudillo-Marbán$^1$ \and Oriol Solé-Pi $^2$}
\address{$^1$ Massachusetts Institute of Technology. ofam01@mit.edu}
\address{$^2$ Department of Mathematics, Massachusetts Insitute of Technology. oriolsp@mit.edu}

\date{}

\begin{document}

\begin{abstract}
Given a set $P$ of points on the plane, a polygon with vertices in $P$ is said to be \emph{empty} if it contains no element of $P$ in its interior. We show that every set of $n$ points in general position on the plane determines at least $\Omega(n^{20/11})$ empty convex pentagons (also known as $5$-holes). This result improves upon the previous bound of $\Omega(n\cdot(\log n)^{4/5})$ obtained by Aicholzer et al.~\cite{aichholzer2020superlinear}, and significantly narrows the gap with respect to the conjectured $\Omega(n^2)$ lower bound (which, if true, would be tight). Unlike some of the other works in this line of research, our proof does not require computer assistance.
\end{abstract}
\small\maketitle

\section{Introduction}
Consider a set $P$ of points on the plane with no three elements lying on the same line (i.e., $P$ is in \textit{general position}). A triangle with vertices in $P$ is said to be a $3$\textit{-hole} if there is no point of $P$ in its interior. More generally, by a $k$\textit-hole of $P$ we mean a convex $k$-sided polygon with vertices in $P$ which contains no element of $P$ in its interior. We will also refer to convex polygons which contain no points of $P$ in their interior as being \textit{empty}.

The study of $k$-holes was initiated in the 1970s by Erd\H{o}s~\cite{erdos1978some}, who asked whether for every positive integer $k$ there exists a $k$-hole in every sufficiently large set of points in general position on the plane. It is easy to see that any set containing at least $5$ points must induce a $4$-hole, and Harborth~\cite{harborth1978konvexe} showed that $10$ points always determine a $5$-hole---both of these results are optimal in regards to the number of points required. A few years later, Horton~\cite{horton1983sets} managed to resolve Erd\H{o}s' question by providing, for every positive integer $n$, a set of $n$ points in general position which determines no $k$-hole for any $k\geq 7$; the point sets he constructed in this paper have come to be known as \textit{Horton sets}. For $k=6$, the question turned out to be significantly harder to address, and it was not until 2007 that Nicolás~\cite{nicolas2007empty} and Gerken~\cite{gerken2008empty} (independently) proved that every sufficiently large set of points must contain a $6$-hole. In the other direction, Overmars~\cite{overmars2002finding} found a set of $29$ points containing no $6$-hole using a local search approach. Recently, a matching upper bound was found by Heule and Scheucher~\cite{heule2024happy}, who utilized novel computational techniques to prove that every set of $30$ points contains a $6$-hole. 

In the study of $k$-holes, there is another closely related question that has received a lot of attention over the last 35 or so years: what is the least possible number of $k$-holes that a set of $n$ points in general position can determine? From now on, we denote this quantity by $h_k(n)$. According to later works, Purdy~\cite{purdy1982minimum} appears to have been the first to consider this question (at least for $k=3$), although we have been unable to find the corresponding conference proceedings. A more systematic study of this problem was initiated in 1987-1988 by Katchalski and Meir~\cite{katchalski1988empty}, Bárány and Füredi~\cite{barany1987empty}, and Dehnhardt~\cite{Dehnhardt1987}. In particular, it was shown in~\cite{katchalski1988empty} that $h_3(n)\leq 200n^2$, and the following lower bounds were obtained in~\cite{barany1987empty}:
\begin{itemize}
    \item $ h_3(n)\geq n^2-O(n\log n)\,;$

    \item $h_4(n)\geq \frac{1}{2}n^2-O(n\log n)\,;$

    \item $h_5(n)\geq \lfloor\frac{n}{10}\rfloor\,.$
\end{itemize}
Note that the last inequality follows easily from the fact that any $10$ points determine a $5$-hole. In fact, Dehnhardt~\cite{Dehnhardt1987} obtained the stronger bounds $h_3(n)\geq n^2-5n+10$ and $h_5(n)\geq3\lfloor\frac{n}{12}\rfloor$, although these result went largely unnoticed for many years after their publication. By exploiting the aforementioned construction of Horton~\cite{horton1983sets}, Bárány and Füredi~\cite{barany1987empty} also managed to show that, for $n$ a power of two, \begin{itemize}
    \item $ h_3(n)\leq2 n^2\,;$

    \item $h_4(n)\leq 3n^2\,;$

    \item $h_5(n)\leq 2n^2\,;$

    \item $h_6(n)\leq \frac{1}{2}n^2\,.$
\end{itemize}

Subsequent improvements by Valtr~\cite{Valtr1995}, Dumitrescu~\cite{dumitrescu2000planar}, and finally Bárány and Valtr~\cite{barany2004planar}, have led to the currently best known upper bounds for these quantities:
\begin{itemize}
    \item $h_3(n)\leq 1.6196n^2 + o(n^2)\,;$

    \item $h_4(n)\leq 1.9397n^2 + o(n^2)\,;$

    \item $h_5(n)\leq 1.0207n^2+o(n^2)\,;$

    \item $h_6(n) \leq 0.2006n^2 + o(n^2)\,.$
\end{itemize}

Regarding lower bounds, Bárány and Károlyi~\cite{barany2000problems} asked the following in the year 2000:

\begin{problem}[Bárány and Károlyi~\cite{barany2000problems}]
    Is there an absolute constant $\epsilon >0$ such that $h_3(n)\geq (1+\epsilon)n^2$ for every sufficiently large integer $n$?
\end{problem}

It is widely believed that the answer to this question is positive. As was shown in~\cite{pinchasi2004empty}, a proof of this would also imply that the answer to both of the following questions is yes.

\begin{problem}
    Is there a constant $\epsilon_1>0$ such that $h_4(n)\geq \left(\frac{1}{2}+\epsilon_1\right)n^2$ for every sufficiently large integer $n$? Is there a constant $\epsilon_2>0$ with the property that $h_5(n)\geq \epsilon_2 n^2$ for all sufficiently large $n$?
\end{problem}

The interest in these question has led several people on a quest to improve the lower bounds on the number of $k$-holes for $k\in\{3,4,5\}$. The focus of our work is on the lower bound for $h_5(n)$. In this direction, a long series of improvements by García~\cite{garcia2011note,Garcia2012}, Aichholzer, Hackl, and Vogtenhuber~\cite{aichholzer20115}, Valtr~\cite{10.5555/2523693.2523699} and Aicholzer et al.~\cite{AICHHOLZER2014605} eventually resulted in the bound $h_5(n)\geq \frac{3}{4}n-o(n)$. These improvements were all attained by exploiting the known values of $h_5(n)$ for small values of $n$ (in particular, for $n=10,11,12$). Table~\ref{table:5-holes} reflects the state of the art in regards to the exact value of $h_5(n)$ for small $n$. It was not until 2017 that Aicholzer et al.~\cite{aichholzer2020superlinear} managed to derive a super-linear lower bound for $h_5(n)$. The proof of this result is computer-assisted. 

\begin{theorem}[Aicholzer et al.~\cite{aichholzer2020superlinear}]\label{thm:aich}
    There is an absolute constant $c>0$ such that for all $n\geq 10$ we have \[h_5(n)\geq cn(\log n)^{\frac{4}{5}}\,.\]
\end{theorem}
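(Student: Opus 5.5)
The plan is to reconstruct the argument of Aichholzer et al.\ along the following lines. For every sufficiently large point set $P$ we aim to find, for a positive fraction of the points, many $5$-holes ``charged'' to them, with the count coming from a recursive halving. First fix a direction and split $P$ by a line $\ell$ into two halves $A$ and $B$ of sizes $\lfloor n/2\rfloor$ and $\lceil n/2\rceil$. The $5$-holes of $P$ then fall into three classes: those contained in $A$, those contained in $B$, and those crossing $\ell$. This gives the recursion
\[h_5(n)\geq h_5(\lfloor n/2\rfloor)+h_5(\lceil n/2\rceil)+X(n),\]
where $X(n)$ is a lower bound on the number of $5$-holes crossing $\ell$. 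If we show $X(n)\geq c'\,n/(\log n)^{1/5}$, summing over the $\log n$ levels gives
\[\sum_{j} 2^j\,\frac{n/2^j}{(\log(n/2^j))^{1/5}}\asymp n\sum_{m\le\log n} m^{-1/5}\asymp n(\log n)^{4/5},\]
which is exactly the claimed bound. So the real task is the crossing term.

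For the crossing term, consider the convex hull of $A$ and the convex hull of $B$ and their two common tangents. We would study the ``bridge'' structure along $\ell$: pairs $(a,b)$ with $a\in A$ and $b\in B$ for which the segment $ab$ is an edge of many empty triangles on both sides. The standard tool is the following.
\begin{itemize}
\item[(i)] A convex empty quadrilateral straddling $\ell$ with two vertices on each side extends to a $5$-hole unless a specific local configuration of the nearby points blocks it. This is shown by the usual argument of taking the point closest to an edge in the appropriate region.
\item[(ii)] There are at least linearly many such crossing empty quadrilaterals. This follows from $h_4$-type counting restricted to $\ell$, for instance by sweeping $\ell$ and using the fact that consecutive points in the angular orders around a bridge give empty triangles.
\end{itemize}
The difficulty is that (i) can fail for most quadrilaterals. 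Our plan is therefore a dichotomy. Either many crossing configurations extend directly, or the blocking configurations force the points near $\ell$ on one side to be in ``convex position relative to $\ell$'', meaning they form long chains that are concave as seen from the other side. In the latter case, $5$-holes can be built entirely from such a chain together with one or two points across. To bound how long all chains can be while still blocking everything, we would use an Erd\H{o}s--Szekeres style trade-off between cups and caps. This is where the exponent $1/5$ should come from: a balance between chain lengths and the number of chains.

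The main obstacle is making this dichotomy quantitative, that is, proving $X(n)\gtrsim n/(\log n)^{1/5}$ rather than merely $X(n)\geq 1$. In the original work, this step is where the computer enters: exhaustively checking all order types of small configurations around $\ell$ (roughly $10$--$12$ points) to certify that every local configuration straddling the line yields a $5$-hole. I would first try to replace this with a hand argument based on the cup/cap analysis above. If that fails, I would accept a finite verification over the small order types, which would be invoked as a lemma, and then combine it with the averaging over choices of $\ell$ and the recursion.
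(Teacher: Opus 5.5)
Your recursion $h_5(n)\geq h_5(\lfloor n/2\rfloor)+h_5(\lceil n/2\rceil)+X(n)$ and your summation are correct. They carry no content on their own, though: with $X(n)\geq c$ they give only a linear bound. So the entire theorem is the claim $X(n)\geq c'n/(\log n)^{1/5}$, and that claim is not proved.

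\begin{itemize}
\item Item (i) is vacuous as stated: a quadrilateral extends ``unless something blocks it''.
\item Item (ii) is asserted without argument.
\item The dichotomy is never formulated. You do not say what the blocking configurations are, or how many crossing $5$-holes a one-sided convex chain of length $k$ plus one or two points across actually produces. You do not say why those pentagons are empty in all of $P$, or why holes obtained from different chains and different quadrilaterals are distinct.
\item The exponent $1/5$ is read off from the target bound rather than derived, and Erd\H{o}s--Szekeres is invoked without saying what it is applied to.
\end{itemize}

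The fallback does not close the gap. A finite check over order types of $10$--$12$ points guarantees only one crossing $5$-hole per checked configuration. That pentagon is empty in $P$ only if the configuration is the trace of $P$ on a convex region, e.g.\ a strip around $\ell$. Since the checked configurations overlap, the holes found may all coincide, so you get $O(1)$ crossing holes per split, not $n/(\log n)^{1/5}$.

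This is also all the computer-assisted structural result of Aichholzer et al.\ provides: a single $5$-hole crossing $\ell$. Moreover, it holds only under a hypothesis absent from your plan, namely that both sides have at least $5$ points and are not in convex position. Turning that into $\Omega(n/(\log n)^{1/5})$ distinct crossing holes, with the convex-position case charged separately, is exactly the missing counting step.

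Finally, this paper does not prove the statement directly. It is a corollary of Theorem~\ref{thm:preliminary_bound}, whose hand proof avoids halving altogether:
\begin{itemize}
\item Lemma~\ref{lem:pentagon_selection} and the layer-based assignment give $\Omega(n^2)$ assignments.
\item Lemma~\ref{lem:assigned_points_convex} turns a hole assigned to $M$ points into $\Omega(M)$ points in convex position.
\item The second part of Theorem~\ref{thm:layers} then gives $\Omega(M^2)$ holes.
\item Balancing $n^2/M$ against $M^2$ yields $\Omega(n^{4/3})$, which dominates $n(\log n)^{4/5}$. The range of small $n\geq 10$ is absorbed using $h_5(n)\geq 1$ and shrinking $c$.
\end{itemize}
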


Until now, this had remained the best known lower bound for $h_n(5)$.

\begin{table}[h]
\begin{tabular}{rrrrrrrrrrrrr}
\multicolumn{1}{l|}{$n$}       & 9 & 10 & 11 & 12 & 13 & 14 & 15 & 16 & 17       & 18        & 19        & 20        \\ \hline
\multicolumn{1}{l|}{ $h_5(n)$ } & 0 & 1  & 2  & 3  & 3  & 6  & 9  & 11 & $\leq16$ & $\leq 21$ & $\leq 26$ & $\leq 33$
\end{tabular}
\caption{The value of $h_n(n)$ for all $n\leq 20$. See~\cite{scheucher2013counting} and~\cite{aichholzer2020superlinear} for further information regarding these values and how they have been obtained.}
\label{table:5-holes}
\end{table}

\subsection{Our result and future work}

As our main contribution, we present a substantial improvement on the lower bound for the number of $5$-holes determined by $n$ points in general position.

\begin{theorem}\label{thm:main}
    There is an absolute constant $c>0$ such that for all $n\geq10$ we have that \[h_5(n)\geq cn^{20/11}\,.\]
\end{theorem}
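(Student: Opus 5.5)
We will argue by charging $5$-holes to pairs of points, following the strategy that produced Theorem~\ref{thm:aich} but keeping much more of the quadratic structure. We start from the standard facts that every $n$-point set determines $\Theta(n^2)$ empty triangles (in fact $h_3(n)\ge n^2-O(n\log n)$) and $\Omega(n^2)$ empty convex quadrilaterals. Our aim is to show that a positive fraction of these structures, up to a polynomial loss, extend to $5$-holes.

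The basic local tool will be a lemma of the following type. Let $ab$ be a segment spanned by $P$, and let $L$, $R$ be the sets of points of $P$ on either side of the line $ab$. Consider the convex hulls of the points of $L$ and of $R$ that are visible from $ab$, and look at the empty triangles $abx$ with $x\in L$ and $aby$ with $y\in R$. We will show that either the quadrilateral $axby$ extends to a $5$-hole by adding a point adjacent to $x$ or $y$ on the corresponding local convex chain, or else one side is ``flat''. Flat here means that the points near $ab$ on that side form a cup or cap compatible with $ab$. The existence of a $5$-hole among $10$ points (Harborth) will be applied to small configurations around $ab$ to rule out degenerate cases. From this lemma we will deduce that the number of $5$-holes is at least of order the number of pairs $ab$ that are not flat on both sides, weighted by how many empty triangles sit on each side.

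The bulk of the argument is therefore to bound the number of pairs that are ``doubly flat''. We would organise $P$ via a convex-layer or cup/cap decomposition. If many pairs are doubly flat, then $P$ contains long convex chains, say chains of length $m$. Each such chain, together with nearby points, contributes roughly $m^2$ $5$-holes by the Erdős--Szekeres-type counting used for $h_4$. If instead chains are short, then flatness fails often and the lemma gives many $5$-holes directly. We then balance the two regimes with a parameter $t=n^{\alpha}$: partition the plane by a cutting of size $r$ into cells with $n/r$ points each, apply the lemma within cells and across cell boundaries, and optimise. The exponents from the cutting lemma (crossing number $O(\sqrt r)$) together with the two-regime tradeoff should produce an exponent of the form $2-\tfrac{2}{11}=20/11$. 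For instance, one condition might say the count is at least $n^2/t^{a}$ and the other at least $n t^{b}$, with $a$ and $b$ chosen so that the optimum is $n^{20/11}$.

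The main obstacle is the flatness lemma: showing that non-flat pairs really yield $5$-holes with bounded multiplicity, so that each $5$-hole is charged only $O(1)$ (or $n^{o(1)}$) times. The second difficulty is making the long-chain regime contribute quadratically in $m$ without double counting. I would first try to prove the lemma by examining the convex hull of $\{a,b\}\cup L'$, where $L'$ is the set of the first few points of $L$ sorted by angle from $a$. On this small configuration I would do a direct case analysis of empty pentagons, avoiding computer assistance, in the spirit of Harborth's $10$-point argument.
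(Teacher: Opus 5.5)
There is a genuine gap. What you have written is a plan, and every step that carries weight is left open.

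\begin{itemize}
\item The ``flatness lemma'' is never stated precisely: ``visible from $ab$'', ``near $ab$'' and ``compatible cup/cap'' are undefined. It is also not proved, and the bounded-multiplicity charging you flag as the main obstacle is exactly what a proof has to supply.
\item Even granting the lemma as phrased, the dichotomy does not close. The lemma yields a $5$-hole only when \emph{neither} side of $ab$ is flat, so you must control pairs flat on at least one side, not just the doubly flat ones.
\item The step ``many flat pairs $\Rightarrow$ a convex chain of length $m$'' has no quantitative content.
\item The claim that such a chain with nearby points gives $\Omega(m^2)$ $5$-holes does not follow from any counting for $h_4$. Points inside the hull of the chain can destroy every pentagon it spans; the paper needs the whole anchor argument of Theorem~\ref{thm:layers} to get $\Omega(|Q|\cdot|\widetilde Q|)$.
\item The cutting/partition step is never connected to $5$-holes.
\item The exponent is reverse-engineered. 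You choose $a,b$ in $n^2/t^a$ versus $nt^b$ so that the optimum is $n^{20/11}$, but nothing in your argument produces these exponents.
\end{itemize}

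The missing ideas are these. A balance of the form ``$\Omega(n^2)$ charges with multiplicity at most $M$'' against ``multiplicity $M$ forces a convex-position set of size $\Omega(M)$, hence $\Omega(M^2)$ $5$-holes'' only gives $n^{4/3}$ (Theorem~\ref{thm:preliminary_bound}). To reach $20/11$, the paper proceeds as follows.
\begin{itemize}
\item It charges $5$-holes to points through blocks of $10$ consecutive points in radial orders within the layer decomposition.
\item It shows that each $5$-hole takes part in only $O(1)$ \emph{good} assignments (Claim~\ref{claim:few_good_assignments}).
\item This is the real source of the exponent: if $p$ receives only $g_p=o(n)$ good assignments, its radial order splits into $O(g_p)$ runs of consecutive points forming empty convex polygons. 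Hence $p$ sees $\Omega(n^5/g_p^4)$ $5$-holes, a degree-$5$ gain coming from $\binom{t}{5}$, not $m^2$.
\item Lemma~\ref{lem:visible_implies_convex} shows that the points from which a fixed $5$-hole is visible and long form $O(1)$ convex-position sets.
\item Finally, if $\sum_p g_p\ge M^2$ one is done by the Claim. Otherwise Jensen gives $\Omega(n^{10}/M^{8})$ pairs (point, visible long $5$-hole). Each $5$-hole occurs in fewer than $M$ such pairs, unless a convex set of size $\Omega(M)$ already yields $\Omega(M^2)$ $5$-holes. Balancing $M^2=n^{10}/M^{9}$ gives $M=n^{10/11}$.
\end{itemize}
Your ``flat'' case is a rough analogue of these empty chains, but you extract only $m^2$ from them. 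You also have no mechanism bounding how many points or pairs see the same $5$-hole.
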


\noindent\textit{Remark.} We make no effort to optimize $c$. In fact, we never provide an explicitly value of $c$ for which the above holds, although it would be perfectly possible to do so using our arguments. 

Our proof is not computer assisted. In fact, the only input we require is the fact that any $10$ points induce a $5$-hole (which was proven by hand in~\cite{harborth1978konvexe}). For every sufficiently large integer $K$, our techniques also reveal some strong structural properties that a set of $n$ points inducing at most $n^2/K$ $5$-holes would need to satisfy. As one might expect, these structural requirements become more restrictive as $K$ increases from $\Omega(1)$ all the way up to $Cn^{2/11}$. It is possible that our methods could be pushed further, and maybe even lead to a proof of the sought-after $\Omega(n^2)$ lower bound for the number of $5$-holes.

It would also be of interest to try and extract from our arguments some information about the number of $3$ and $4$ holes (perhaps in the style of~\cite{Garcia2012}), or to try to adapt them to attack other related problems, such as the ones studied in the papers~\cite{barany2000problems,pach2013monochromatic,aichholzer2010large,diaz2021note,bhattacharya2026number} and the references therein.

Finally, an easy consequence of the fact that a sufficiently large point set must contain a $6$-hole is that $h_6(n)\geq\Omega(n)$. No superlinear lower bound for $h_6(n)$ is known and, as mentioned above, the best known upper bound for this quantity grows quadratically in $n$. While it seems unlikely that there is much to be said about $h_6(n)$ without first obtaining some further information regarding the appearance of $6$-holes in small point sets, we hope that the present paper will motivate people to work further on trying to understand the growth rate of this function.

\subsection{Preliminaries and notation}
Throughout this work, points on the plane will be denoted by lower case letters, and sets of points will be denoted by upper case letters. Given any two distinct points $a$ and $b$ on the plane, we use $\overleftrightarrow{ab}$ to indicate the line through them, oriented from $a$ to $b$, and we write $\overrightarrow {ab}$ to denote the infinite ray starting at $a$ and going through $b$. We also let $\overline{ab}$ be the segment with endpoints $a$ and $b$. For any three pairwise distinct points $a$, $b$ and $c$ on the plane, we define $\angle abc$ as the closed angular region contained in-between the rays $\overrightarrow{ba}$ and $\overrightarrow{bc}$ when moving in clockwise direction around $b$. We shall use $\measuredangle abc$ to indicate the angular measure of $\angle abc$, expressed as a number in $[0,2\pi)$ (see Figure~\ref{fig:angle_definition}).

\begin{figure}[h]
    \centering
    \includegraphics[width=.45\linewidth]{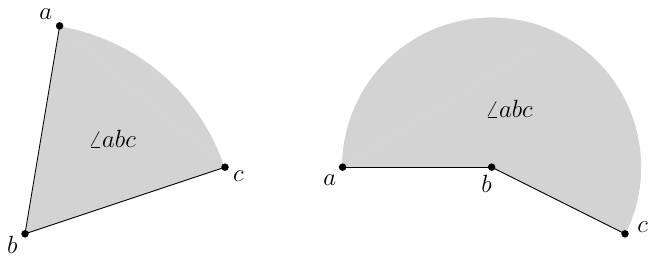}
    \caption{The image shows two possible configurations of three distinct points $a$, $b$ and $c$, along with the angle $\angle abc$ determined by them. On the left, we have that $\measuredangle abc<\pi$, while on the right $\measuredangle abc>\pi$.}
    \label{fig:angle_definition}
\end{figure}

A $k$-vertex polygon with vertices $p_1,\dots,p_k$ (in that order) will be denoted simply by $p_1p_2\dots p_k$. While we will often list the vertices in clockwise order, we make no distinction between orientations here, so the same polygon could also be written as $p_kp_{k-1}\dots p_1$. For clarity, we will add the symbol $\triangle$ in front when talking about triangles---this way, the triangle with vertices $a$, $b$ and $c$ will be denoted by $\triangle abc$. All polygons we consider are \textit{simple} (i.e., only adjacent sides intersect, and they do so only at their common endpoints). Furthermore, we think of polygons as being \textit{solid} (meaning that the finite region enclosed by the boundary is part of the polygon itself).

Given a set of points $P$ in general position on the plane, its \textit{convex hull} is the smallest convex polygon which contains all of its elements. Note that each vertex of the convex hull of $P$ must itself be an element of $P$. We say that $P$ is in \textit{convex position} if all of its elements lie on the boundary of its convex hull (or, equivalently, if all its elements are vertices of the convex hull). We now define the \textit{layer decomposition} of $P$. This decomposition will be an ordered partition of $P$ into sets, called \textit{layers}, each of which is in convex position. The \textit{first layer} $\ell_1$ of the decomposition consists of those elements of $P$ which belong to the boundary of its convex hull. We define the rest of the decomposition inductively: if $\ell_1,\dots \ell_k$ are the first $k$ layers of $P$, then the \textit{k+1}-th layer $\ell_{k+1}$ of $P$ is defined as the first layer of the set $P\backslash (\ell_1\cup\dots \ell_k)$; if $P=\ell_1\cup\dots\cup \ell_k$ holds, then we stop the process. The \textit{number of layers} of $P$ is simply the number layers which conform its layer decomposition, and will be denoted by $L(P)$. See Figure~\ref{fig:layer_decomposition}.

\begin{figure}[h]
    \centering
    \includegraphics[width=.225\linewidth]{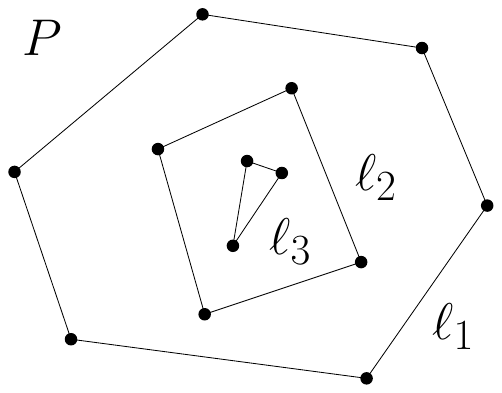}
    \caption{The picture shows a set $P$ of $13$ points with $L(P)=3$. The outer-most layer is $\ell_1$, the inner-most layer is $\ell_3$, and the middle layer is $\ell_2$.}
    \label{fig:layer_decomposition}
\end{figure}

Let $P$ be a set of $n$ points in general position on the plane. Given a point $p\in P$ on the boundary of the convex hull of $P$, the \textit{radial order} of $P$ with respect to $p$ is the unique ordering $p_1,\dots,p_{n-1}$ of the $n-1$ elements of $P\backslash\{p\}$ which satisfies the two following properties: 
\begin{itemize}
\item $\measuredangle p_1pp_{n-1}<\pi$;

\item $p_j\in\angle p_ipp_k$ for every three indices $i,j,k$ with $1\leq i<j<k\leq n-1$.
\end{itemize}
See Figure~\ref{fig:radial_ordering}.

\begin{figure}[h]
    \centering
    \includegraphics[width=.225\linewidth]{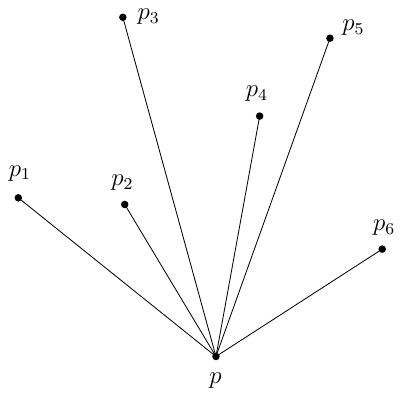}
    \caption{The picture shows a set of $7$ points on the plane. The point $p$ lies on the boundary of its convex hull, and the other $6$ points have been labeled according to the radial order with respect to $p$.}
    \label{fig:radial_ordering}
\end{figure}

Regarding asymptotic notation, throughout the paper we will use the two following definitions of \textit{big-}$O$ and $\Omega$ notation: Given two non-negative functions $f(x)$ and $g(x)$ defined on some unbounded subset of $\mathbb R_{\geq 0}$, we write
\begin{itemize}
    \item $f(x)\leq O(g(x))$ if there is a constant $C>0$ such that $f(x)\leq Cg(x)+C$ holds for all $x$ in the domain of both functions;
    \item and $f(x)\geq\Omega(g(x))$ if there is a constant $c>0$ such that $f(x)\geq cg(x)-c^{-1}$ for all $x$ in the domain of both functions.
\end{itemize}

\subsection*{Organization of the paper}
In Section~\ref{sec:layers}, we will provide a result connecting the number of $5$-holes in $P$ to the number of layers $L(P)$. The proof of this result already showcases one of the main novel ingredients in our approach, namely, a certain way to assign $5$-holes to points. In Section~\ref{sec:technical_lemmas}, we prove several further properties of this assignment of $5$-holes and present a short proof of the inequality $h_5(n)\geq\Omega(n^{4/3})$ (which already improves upon the state of the art). Finally, Theorem~\ref{thm:main} is proven in Section~\ref{sec:main}.

\subsection*{Acknowledgments} 
We thank Pavel Valtr for his insightful comments and encouragement. OAM was supported by MIT UROP. OSP was supported in part by NSF--Simons collaboration grant DMS-2031883.

\section{A suggestive result}\label{sec:layers}

Here, we prove the theorem below, which constitutes a lower bound on the number of $5$-holes induced by $P$ in terms of the number of layers $L(P)$. During the proof of this bound, we make use of a certain assignment which associates to some elements of $P$ a collection of $5$-holes induced by $P$. Our goal in later sections will be to refine this assignment and better understand how it behaves depending on certain structural properties of $P$; this will eventually lead to a proof of Theorem~\ref{thm:main}.

\begin{theorem}\label{thm:layers}
    Every set $P$ of $n$ points in general position on the plane induces at least \[\Omega\left(\frac{n^2}{L(P)}\right)\] $5$-holes. (Recall that $L(P)$ denotes the number of layers of $P$.) Moreover, if $\ell_1$ denotes the first layer of $P$, then the number of $5$-holes induced by $P$ is also bounded from below by $\Omega(|\ell_1|\cdot n)$.
\end{theorem}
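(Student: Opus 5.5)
The plan is to use radial orders around hull points to produce many $5$-holes, and then to peel off layers. Fix a point $p$ on the boundary of the convex hull of $P$, and let $p_1,\dots,p_{n-1}$ be the radial order of $P$ with respect to $p$. For each block of nine consecutive points $W_i=\{p_{9i+1},\dots,p_{9i+9}\}$, consider the $10$-point set $\{p\}\cup W_i$. Its convex hull lies in the wedge $\angle p_{9i+1}pp_{9i+9}$. By the definition of the radial order, the only points of $P$ in that wedge are $p$ and the elements of $W_i$. Harborth's theorem gives a $5$-hole of $\{p\}\cup W_i$, and by the previous observation no point of $P$ lies in its interior, so it is a $5$-hole of $P$. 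The $5$-holes obtained from different blocks are distinct, because each one has at least four of its vertices in its own block and the blocks are disjoint. Hence every hull point $p$ is assigned at least $\lfloor (n-1)/9\rfloor$ distinct $5$-holes, each contained in a thin wedge at $p$ holding only $O(1)$ points of $P$.

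To get the bound $\Omega(|\ell_1|\cdot n)$, I will sum this over all $p\in\ell_1$. This gives $\Omega(|\ell_1| n)$ pairs $(p,H)$. The remaining task is a multiplicity bound: each $5$-hole $H$ is assigned to only $O(1)$ hull points. If $p$ is a vertex of $H$, this is immediate, since $H$ has five vertices. So the main obstacle is the case where $p\notin H$. The first thing I would try is to refine the assignment so that $p$ is always a vertex. Inside $\{p\}\cup W_i$, with $p$ extreme and the other nine points sorted by angle, I would aim to show that some $5$-hole uses $p$, possibly at the price of using a larger window of constant size and a case analysis on the radial order.

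If that fails, I would instead bound how many hull points $q$ can have $H$ lying in a wedge at $q$ that contains at most $9$ points. Two such hull points $q,q'$ lying far apart along $\ell_1$ would force the two wedges, both containing $H$, to cover a region that must contain many points of $P$. I would try to turn this into an $O(1)$ bound by comparing the angular ranks of the vertices of $H$ as seen from consecutive hull points.

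For the bound $\Omega(n^2/L(P))$, I will peel the layers. Set $P_k=\ell_k\cup\ell_{k+1}\cup\dots\cup\ell_{L}$. Each $P_k$ has first layer $\ell_k$. Every $5$-hole of $P_k$ is a $5$-hole of $P$, because all points of $\ell_1\cup\dots\cup\ell_{k-1}$ lie outside $\mathrm{conv}(P_k)$. Applying the second statement to each $P_k$ gives $\Omega(|\ell_k|\,|P_k|)$ $5$-holes of $P$ for every $k$. Moreover, $\sum_k |\ell_k|\,|P_k| \ge \sum_{k} |\ell_k|\sum_{j\ge k}|\ell_j| \ge n^2/2$.

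It then remains to bound how many indices $k$ can produce the same $5$-hole $H$. The hole $H$ can arise from $P_k$ only if its vertices lie in $P_k$, and, by the charging above, only if it is charged to a vertex of $\ell_k$. Summing naively over $k$ could count $H$ up to $L(P)$ times. Dividing the total $\Omega(n^2)$ by this multiplicity yields $\Omega(n^2/L(P))$. So both statements rest on the same step: controlling, within a single layer, how many hull points a given $5$-hole can be charged to. That is where I expect the real work of the proof to lie.
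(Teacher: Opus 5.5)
Your reduction of the first bound to the second is sound. Apply the second statement to $P_k=\ell_k\cup\dots\cup\ell_L$, observe that $5$-holes of $P_k$ are $5$-holes of $P$, and take the $k$ maximizing $|\ell_k||P_k|\ge n^2/(2L)$. This is a legitimate and somewhat cleaner alternative to the paper's direct count over the first $k_{\mathrm{mid}}$ layers. However, the second bound has a genuine gap. The multiplicity bound, which is the whole content of the theorem, is left open, and neither of your proposed routes can work.

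\emph{Route (a) is impossible.} Take $p$ together with $n-1$ points on a circular arc bulging toward $p$, so that $p_b\in\triangle pp_ap_c$ whenever $a<b<c$. Then $p$ is a hull vertex lying on no convex quadrilateral, let alone a $5$-hole, however large the window.

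\emph{Route (b) is false as stated.} Let $P$ be in convex position and let $H$ be five consecutive hull vertices. For every other hull point $q$, the wedge at $q$ spanned by $H$ contains only the vertices of $H$. Since your rule takes an arbitrary $5$-hole of $\{p\}\cup W_i$, this $H$ can be charged to $\Theta(n)$ points of $\ell_1$. So no $O(1)$ multiplicity bound holds for the assignment you defined.

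The missing idea is a choice rule that forces rigid structure.
\begin{enumerate}
\item Use blocks of $10$ points, so that together with $p$ there are $11$.
\item First look for a $5$-hole of $P$ having $p$ and some block element as vertices.
\item Only if none exists, use Lemma~\ref{lem:pentagon_selection} to pick a $5$-hole inside the block with two vertices $a,b$ such that $\triangle pab$ is empty.
\end{enumerate}
Now suppose $H\not\ni p$ is charged to $p\in\ell_i$ via $a,b$, and let $q,r$ be the neighbours of $p$ on the polygon $\ell_i$. Then $\overleftrightarrow{ab}$ must cross $\overline{pq}$ or $\overline{pr}$. Otherwise $pqabr$ is a convex pentagon. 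Replace $q$ by the point of $P$ in $\triangle pqa$ closest to $\overleftrightarrow{pa}$, and $r$ by the point of $P$ in $\triangle pbr$ closest to $\overleftrightarrow{pb}$. Since $\triangle pab$ is empty, this yields a $5$-hole $pq'abr'$ through $p$ and the block element $a$, contradicting the priority rule. A line meets the boundary of a convex polygon at most twice, so each anchor pair serves at most $4$ points of a layer. Hence each $H$ is charged to at most $\binom52\cdot 4+5$ points per layer, which is exactly the step your proposal needs.
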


\noindent\textit{Remark.} Let us clarify once again that we make no real effort to optimize the constants hidden by the asymptotic notation throughout the paper.
\medskip

The proof of this will require the following lemma.

\begin{lemma}\label{lem:pentagon_selection}
    Let $P$ be a set of $11$ points in general position on the plane, and let $p\in P$ lie on the boundary of its convex hull. Consider the radial order $p_1,\dots,p_{10}$ of $P$ with respect to $P$. Then, at least one of the two following statements must be true:

    \begin{itemize}
        \item there exists a $5$-hole in $P$ which has $p$ as a vertex;
        \item there exists a $5$-hole in $P$ which has among its vertices two points $a$ and $b$ such that the triangle $\triangle pab$ is empty.
    \end{itemize}
\end{lemma}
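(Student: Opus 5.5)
The plan is to apply Harborth's theorem to the $10$ points $Q=P\setminus\{p\}$ and then refine the resulting $5$-hole using an extremal choice.

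First, a basic observation. Since $p$ lies on the boundary of the convex hull of $P$, it cannot lie in the interior of any convex polygon with vertices in $P$. Hence every $5$-hole of $Q$ is also a $5$-hole of $P$. By~\cite{harborth1978konvexe}, $Q$ contains at least one $5$-hole, so the family $\mathcal H$ of $5$-holes of $P$ not having $p$ as a vertex is nonempty.

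Now suppose the first alternative of the lemma fails, so no $5$-hole of $P$ has $p$ as a vertex. For $H\in\mathcal H$, consider the edges of $H$ visible from $p$, i.e.\ those edges $ab$ for which $p$ and the interior of $H$ lie on opposite sides of $\overleftrightarrow{ab}$. Since $p\notin H$, there is at least one such edge. I would choose a pair $(H,ab)$, with $H\in\mathcal H$ and $ab$ a visible edge of $H$, that minimizes the number of points of $P$ in the interior of $\triangle pab$. If this minimum is $0$, the second alternative holds and we are done.

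So assume $\triangle pab$ contains a point of $P$. Let $q$ be such a point closest to the line $\overleftrightarrow{ab}$. Then $\triangle qab$ is empty. Moreover, the quadrilateral formed by $q$, $a$, $b$ and the region of $H$ is "glued" along the edge $ab$, and $\triangle pab$ has strictly fewer points once $q$ is removed. The key step is to manufacture from $H$ and $q$ a new $5$-hole $H'$ with a visible edge $a'b'$ such that the interior of $\triangle pa'b'$ lies inside that of $\triangle pab$ and misses $q$, contradicting minimality. Alternatively, the construction should produce a $5$-hole through $p$, contradicting our assumption.

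For this I would use the radial order $p_1,\dots,p_{10}$. The vertices of $H$ form an interval of "angular positions" around $p$, and $q$ lies in the cone $\angle apb$ (or $\angle bpa$). I would consider the convex hull of $H\cup\{q\}$. If it is a convex pentagon or hexagon containing $q$ as a vertex, I would take $q$, $a$ or $b$, and suitable vertices of $H$ to obtain a convex pentagon. Such a pentagon is empty because $H$ and $\triangle qab$ are empty, and it has a visible edge incident to $q$, whose triangle with $p$ is contained in $\triangle pab$. In the remaining case $q$ fails to be a vertex (it lies in a reflex position relative to the chain of $H$ visible from $p$). There I would instead use that $\triangle pab$ together with the points it contains is small: the points strictly inside $\triangle pab$ together with $p,a,b$ and the ordering around $p$ should allow a direct construction of a convex pentagon with vertex $p$.

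The main obstacle is exactly this case analysis: ensuring that the new pentagon is convex and empty, and that its visible triangle strictly decreases. I expect the convexity at $q$ to be the delicate point. If the local surgery fails in general, my fallback is to choose $H$ more cleverly from the start. I would pick a $5$-hole of $Q$ whose vertex set is lexicographically smallest in the radial order, or one minimizing the area of $\operatorname{conv}(H\cup\{p\})$. I would then argue that any point inside the corresponding triangle $\triangle pab$ contradicts that extremality, using Harborth's theorem again on a suitable $10$-point subset of $P\setminus\{p\}$.
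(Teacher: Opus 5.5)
\textbf{There is a genuine gap.} The step you call the key one is never carried out, and part of it fails as described.

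\textbf{What works.} The hexagon case is fine. Suppose $q$ lies beyond $\overleftrightarrow{ab}$ but on the same side as $H$ of the two neighbouring edge lines. Then $aqbcde=H\cup\triangle qab$ is an empty convex hexagon. Hence $qbcde$ is a $5$-hole whose edge $qb$ is visible from $p$, and $\triangle pqb$ contains strictly fewer points than $\triangle pab$.

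\textbf{What fails.} Suppose $q$ also lies beyond $\overleftrightarrow{bc}$ (or $\overleftrightarrow{ea}$). Then $b$ (or $a$) is swallowed by $\operatorname{conv}(H\cup\{q\})$. In this case your claim that $q$ together with suitable vertices of $H$ gives an empty convex pentagon is false. Take $a=(1,0)$, $b=(0,0)$, $c=(0,1)$, $d=(1,1)$, $e=(3/2,1/2)$, $q=(-2,-1)$ and $p=(-8,-18/5)$.
\begin{itemize}
\item The edge $ab$ of the $5$-hole $abcde$ is visible from $p$, and $q$ lies in the interior of $\triangle pab$.
\item Yet $b$ lies in the interior of $\triangle aqc$, $\triangle qec$ and $\triangle qad$.
\item So no five of $q,a,b,c,d,e$ that include $q$ span a $5$-hole.
\end{itemize}
Any replacement pentagon must therefore use further points of $P$ in regions such as $\triangle qbc$. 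Neither your minimality assumption nor the hypothesis ``no $5$-hole through $p$'' controls what lies there.

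Three smaller points:
\begin{itemize}
\item Your case ``$q$ fails to be a vertex'' cannot occur, since $q$ lies strictly beyond $\overleftrightarrow{ab}$. The real issue is that vertices of $H$ get swallowed.
\item The fallback extremal choices (lexicographic, minimal area) are a hope, not an argument.
\item By insisting on a visible \emph{edge}, you are aiming at a statement stronger than the lemma. The lemma only needs two vertices $a,b$ of the hole with $\triangle pab$ empty.
\end{itemize}

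\textbf{The missing idea} is to work in the radial order, where emptiness comes for free. If $p_i,p_{i+1}$ are consecutive around $p$, then $\triangle pp_ip_{i+1}$ is empty, whether or not $p_ip_{i+1}$ is an edge of the hole. The paper argues as follows.
\begin{itemize}
\item It applies Harborth's theorem to $\{p,p_1,\dots,p_9\}$ and to $\{p,p_2,\dots,p_{10}\}$.
\item If no $5$-hole passes through $p$, each resulting hole either has two radially consecutive vertices (and we are done), or the two holes are exactly $p_1p_3p_5p_7p_9$ and $p_2p_4p_6p_8p_{10}$.
\item In the alternating case, suppose some $\triangle pp_ip_{i+2}$ misses $p_{i+1}$. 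Then it is empty, and $p_i,p_{i+2}$ are vertices of one of the two holes.
\item Otherwise every $p_{i+1}$ lies in $\triangle pp_ip_{i+2}$. Then $p_1\dots p_{10}$ is a convex $10$-hole, so $p_1p_2p_3p_4p_5$ is a $5$-hole with $\triangle pp_1p_2$ empty.
\end{itemize}
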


\begin{proof}
    Assume that there is no $5$-hole in $P$ which has $p$ as a vertex and consider the radial order $p_1,\dots,p_{10}$ of $P$ with respect to $p$. The set $\{p,p_1,p_2,\dots,p_{9}\}$ contains $10$ points, and so it must induce at least one $5$-hole, which we denote by $H_1$. If there is some index $1\leq i\leq8$ such that $p_i$ and $p_{i+1}$ are both vertices of $H_1$, then we are done, as the triangle $\triangle pp_ip_{i+1}$ is clearly empty. Otherwise, the vertices of $H_1$ must be exactly $p_1$, $p_3$, $p_5$, $p_7$ and $p_9$. Applying the same reasoning to the set $\{p,p_2,p_3,\dots,p_{10}\}$, we conclude that $p_2$, $p_4$, $p_6$, $p_8$ and $p_{10}$ are the vertices of another $5$-hole $H_2$.
    
    Notice that if, for some index $1\leq i \leq 8$, we had that the triangle $\triangle pp_ip_{i+2}$ does not contain the point $p_{i+1}$, then such a triangle would be empty. As $p_i$ and $p_{i+2}$ both belong to either the $5$-hole $H_1$ or the $5$-hole $H_2$, we would get that at least one of these two $5$-holes fulfills the second property mentioned in the lemma. Thus, suppose that the triangle $\triangle pp_ip_{i+2}$ contains $p_{i+1}$ for all $1\leq i\leq 8$. Notice then that $p_1p_2\dots p_{10}$ is a $10$-hole in $P$. In particular, $p_1p_2p_3p_4p_5$ is a $5$-hole. As the triangle $\triangle pp_1p_2$ is empty, this concludes the proof of the lemma.
\end{proof}

We move on to studying the connection between $L(P)$ and the number of $5$-holes.
\begin{proof}[Proof of Theorem~\ref{thm:layers}]
    We may assume that $n$ is a multiple of $80$; this is done simply to avoid having to worry about writing floor functions later on. 
    
    Consider the layer decomposition $(\ell_1,\dots,\ell_{L(P)})$ of $P$. Let $k_{\operatorname{mid}}$ be the largest positive integer such that $\sum_{i=1}^{k_{\operatorname{mid}}-1}|\ell_i|< n/2$, and note that $\sum_{i=1}^{k_{\operatorname{mid}}}|\ell_i|\geq  n/2$ and $\sum_{i=k_{\operatorname{mid}}}^{L(P)}|\ell_i|> n/2$. To each element of $\ell_1\cup\dots\cup\ell_{k_{\operatorname{mid}}}$ we will assign a collection of at least $ n/80$ distinct $5$-holes in such a way that every $5$-hole is assigned to at most $40$ points of $\ell_i$, other than its vertices, for every $1\leq i\leq k_{\operatorname{mid}}$. Note that having such an assignment would immediately imply the existence of at least \[\frac{(n/2)\cdot (n/80)}{40k_{\operatorname{mid}}+5}\] distinct $5$-holes, thus proving the first part of the statement (as $k_{\operatorname{mid}}\leq L)$. Similarly, by restricting our attention to the $5$-holes assigned to vertices in $\ell_1$, we deduce that the number of $5$-holes is at least \[\frac{|\ell_1|\cdot( n/80)}{45}\,,\] implying the second part of the statement as well.
    
    Let us describe the collection of $5$-holes that will be assigned to a given point $p\in \ell_1\cup\dots\cup\ell_{k_{\operatorname{mid}}}$. Suppose that $p\in \ell_i$ and write $P_i=\ell_i\cup\dots\cup\ell_{L(P)}$. Then, $p$ is a vertex of the convex hull of $P_i$, and we can consider the radial order $p_1,\dots,p_m$ of $P_i$ with respect to $p$. Since $\sum_{i=k_{\operatorname{mid}}}^{L(P)}|\ell_i|> n/2$, $m\geq n/2$. We split the points $p_1,\dots,p_{10\cdot\lfloor m/10\rfloor}$ into disjoint blocks of size $10$: $(p_1,\dots,p_{10})$, $\dots$, $(p_{10\cdot\lfloor m/10-1\rfloor+1},\dots, p_{10\cdot\lfloor m/10\rfloor})$. We number these blocks as $B_1,\dots,B_{10\lfloor m/10\rfloor}$ (in the order they were listed). Observe that each $B_i$, together with $p$, constitutes a set of $11$ points which has $p$ as a vertex of its convex hull, and that the only elements of $P$ which are contained in this convex hull are those in $B_{i}\cup\{p\}$. For each block $B_i$ we will assign a $5$-hole to $p$, chosen as follows. If there is some $5$-hole in $P$ which has both $p$ and some element of $B_i$ among its vertices, then we assign this $5$-hole to $p$. Otherwise, by Lemma~\ref{lem:pentagon_selection}, there must exist some $5$-hole with vertices in $B_i$ such that two of its vertices $a$ and $b$ satisfy that the triangle $\triangle pab$ is empty; we assign this $5$-hole to $p$, and we say that $a$ and $b$ are the \textit{anchors} of this assignment (if there are multiple pairs of vertices of $H$ which can act as anchors, we fix one arbitrarily). Some $5$-holes might have been assigned to $p$ more than once.

    We now show that this assignment has the properties stated above. For starters, including all repeated assignments, there will be $\lfloor m/10\rfloor\geq n/20$ $5$-holes assigned to $p$. Moreover, in order for a $5$-hole to be assigned $t$ times to $p$, this $5$-hole must have $p$ as a vertex, and must also have $t$ other vertices in different blocks of the form $B_i$. It follows that no $5$-hole is assigned to $p$ more than four times, and thus there are at least $n/80$ distinct $5$-holes assigned to $p$. 
    
    Now, suppose that some $5$-hole $H$ which does not have $p$ as a vertex has been assigned to $p$. Then, $H$ must contain two vertices $a$ and $b$ such that the triangle $\triangle pab$ is empty ($a$ and $b$ are the anchors of the assignment). Recall that the elements of $\ell_i$ are the vertices of a convex polygon, and denote by $q$ and $r$ the two vertices which are adjacent to $p$ in this polygon. We claim that $\overleftrightarrow{ab}$ must intersect either $\overline{pq}$ or $\overline{pr}$. If this were not the case, then the points $p$, $a$, $b$, $q$ and $r$ would be the vertices of a convex pentagon; without loss of generality, assume that the vertices of this pentagon appear in the order $p,q,a,b,r$ along its boundary (the only other option is $p,r,a,b,q$, which is symmetric). Let $q'$ be the element of $P$ in $\triangle pqa$ with smallest non-zero Euclidean distance to $\overleftrightarrow{pa}$, and let $r'$ denote the element of $P$ in $\triangle pbr$ with smallest non-zero Euclidean distance to $\overleftrightarrow{pb}$; it could occur that $q=q'$ and/or $r=r'$. Then, since $\triangle pab$ is empty, $pq'abr'$ is a $5$-hole (see Figure~\ref{fig:5-hole-assignment}), and this $5$-hole would have been assigned to $p$ instead of $H$. This implies the claim. 
    
    The fact that $\overleftrightarrow{ab}$ intersects $\overline{pq}$ or $\overline{pr}$ tells us that $H$ can be assigned to at most 
    $4$ points from $\ell_i$ with $a$ and $b$ as anchors (the line $\overleftrightarrow{ab}$ crosses at most two side of the convex polygon formed by the elements of $\ell_i$, and each such crossing yields at most two assignments). It follows that each $5$-hole is assigned to at most $\binom{5}{2}\cdot 4=40$ points of each layer which are not its vertices, as desired.
\end{proof}

\begin{figure}[h]
    \includegraphics[width=.8\linewidth]{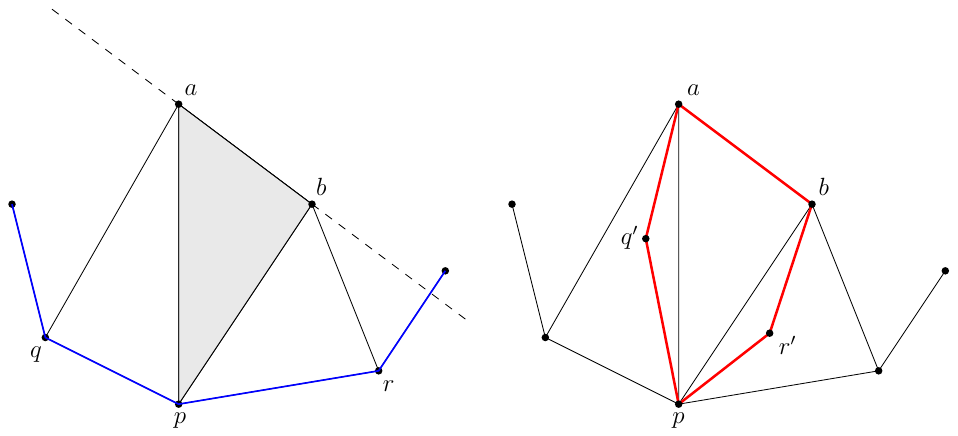}
    \caption{On the left, a portion of the layer $\ell_i$ has been highlighted in blue. In particular, we see the point $p$ and its adjacent vertices $q$ and $r$. The triangle $\triangle pab$ is empty, and we assume that the line $\overleftrightarrow{ab}$ crosses neither $\overline{pq}$ nor $\overline{pr}$. Then, $pqabr$ is a convex pentagon. On the right, we see the points $p'$ and $r'$, which have been chosen as follows. Out of the elements of $P$ contained in $\triangle pqa$ (other than $p$ and $a$), $q'$ is the one closest to $\overleftrightarrow{pa}$. Out of the elements of $P$ contained in $\triangle pbr$ (other than $p$ and $b$), $r'$ is the one closest to $\overleftrightarrow{pb}$. The points $p$, $q'$, $a$, $b$ and $r'$ are the vertices of a $5$-hole (highlighted in red).}
    \label{fig:5-hole-assignment}
\end{figure}

\section{Properties of the assignment}\label{sec:technical_lemmas}

In this section, we study some further properties satisfied by the assignment of $5$-holes to points described in the previous section, and provide a short proof of the lower bound $h_5(n)\geq\Omega(n^{4/3})$.

\begin{lemma}\label{lem:assigned_points_convex}
    Let $H$ be a $5$-hole within $P$. Then, the points of $P$ to which $H$ has been assigned can be partitioned into $O(1)$ sets, each of which is in convex position.
\end{lemma}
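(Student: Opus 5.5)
Let $H$ be a $5$-hole of $P$, and let $A$ be the set of points of $P$ to which $H$ was assigned. The plan is to split $A$ according to \emph{how} $H$ was assigned to each point, and then show that each resulting class lies in convex position.

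The first class is $A_0$, the points of $A$ that are vertices of $H$. It has at most $5$ elements, so it can be written as at most $5$ singletons, each trivially in convex position.

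Every other point $p\in A\setminus A_0$ received $H$ through Lemma~\ref{lem:pentagon_selection}, so the assignment came with a pair of anchors $\{a,b\}$ among the vertices of $H$. There are at most $\binom{5}{2}=10$ such pairs. For each pair $\{a,b\}$, let $A_{ab}$ be the set of points assigned $H$ with anchors $a,b$. It therefore suffices to show that each $A_{ab}$ splits into $O(1)$ sets in convex position.

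For a fixed pair $a,b$, every $p\in A_{ab}$ has the following properties:
\begin{itemize}
\item $\triangle pab$ is empty of points of $P$;
\item $p$ lies outside $H$, because a point inside $H$ would contradict emptiness (and $p$ is not a vertex of $H$);
\item the line $\overleftrightarrow{ab}$ crosses $\overline{pq}$ or $\overline{pr}$, where $q,r$ are the neighbours of $p$ on its layer $\ell_i$. This is the claim proved in Theorem~\ref{thm:layers}.
\end{itemize}
Next, split $A_{ab}$ by the side of $\overleftrightarrow{ab}$ on which $p$ lies; this costs a factor of $2$. Fix one open half-plane $S$ and let $Q=A_{ab}\cap S$.

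The main step is to show that $Q$ is in convex position. I would argue by contradiction: suppose $p\in Q$ lies in the interior of $\triangle xyz$ with $x,y,z\in Q$. Since $\triangle xab$, $\triangle yab$ and $\triangle zab$ are all empty and all lie on the same side of $ab$, the point $p$ must lie outside each of them. My plan is to use the "angular order seen from the segment $ab$" (the order of the rays from $a$, or equivalently from $b$, toward the points of $Q$) together with the layer structure. The point $p$ lies in the convex hull of $x,y,z$, so $p$ is on a strictly deeper layer than at least one of them; in fact, being interior to their hull, $p$ belongs to a layer of index at least $\min$ of theirs plus one.

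However, $\overleftrightarrow{ab}$ must cross an edge of the layer of $p$ that is incident to $p$. That layer's polygon encloses the points of deeper layers, and $a,b$ (being points of $P$ in the part of the plane considered in the radial order from $p$) lie in $P_i$, which is inside that polygon. I would try to combine these facts to show that $x$, $y$ or $z$ would then lie inside the triangle $pab$, or that the emptiness of one of the triangles would fail.

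If a direct contradiction does not appear, the fallback is a weaker but sufficient decomposition. Points of $Q$ on a common layer are automatically in convex position. I would then show that only $O(1)$ layers can contribute points of $Q$ which are mutually "nested", using the fact that $\triangle pab$ empty forces $p$ to see the whole segment $ab$, so points of $Q$ on deeper layers must lie in the cone region bounded by rays through $a$ and $b$.

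The main obstacle is exactly this step: ruling out a point of $Q$ that lies inside the triangle formed by three others. I expect the resolution to come from the chord condition (the line $ab$ meeting an edge of $p$'s layer at $p$) combined with the emptiness of the triangles $\triangle xab$. The first thing I would try is placing $ab$ horizontally with $Q$ above it, and comparing the points $p$, $x$, $y$, $z$ by the slopes of their rays to $a$ and to $b$.
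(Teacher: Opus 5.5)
\textbf{There is a genuine gap.} Your decomposition (the vertices of $H$, then the $10$ anchor pairs, then the two sides of $\overleftrightarrow{ab}$) is the same as the paper's. However, the heart of the lemma, that each class $Q=A_{ab}\cap S$ is in convex position, is left unproved, and the facts you plan to use cannot prove it.

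Emptiness of the triangles $\triangle sab$ for $s\in Q$, together with all of $Q$ lying on one side of $\overleftrightarrow{ab}$, does not force convex position. Take $a=(-0.1,-1)$, $b=(0.1,-1)$, $x=(-10,0)$, $y=(10,0)$, $z=(5,10)$, $p=(0,1)$. Then $\triangle xab$, $\triangle yab$, $\triangle zab$ and $\triangle pab$ are all empty, yet $p$ lies inside $\triangle xyz$. So neither contradiction you hope for (one of $x,y,z$ landing in $\triangle pab$, or some $\triangle sab$ being non-empty) has to occur.

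The chord condition does not rescue this. It only says that, on a fixed side of $\overleftrightarrow{ab}$, each layer contains at most two points of $Q$. Since there can be unboundedly many layers, this gives no $O(1)$ bound, and your fallback about ``only $O(1)$ nested layers'' is not argued.

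\textbf{The missing ingredient is the priority rule of the assignment.} $H$ is assigned to $p$ via anchors only if no $5$-hole has $p$ together with a point of $p$'s block among its vertices. The chord condition is one consequence of this rule, applied to $p$'s own layer polygon. The paper reruns the same pentagon-pushing argument with the convex hull $R$ of $Q$ in place of the layer polygon, as follows.
\begin{enumerate}
\item Suppose $p\in Q$ is interior to $R$. The rays $\overrightarrow{pa}$ and $\overrightarrow{pb}$ must leave $R$ through the same side $\overline{qr}$. Otherwise some vertex of $R$ lies in the cone at $p$ bounded by these rays and on $p$'s side of $\overleftrightarrow{ab}$, that is, in $\triangle pab$, contradicting emptiness.
\item It follows that $pqabr$ is a convex pentagon.
\item Let $q'$ be the point of $P$ in $\triangle pqa$ closest to $\overleftrightarrow{pa}$, and $r'$ the point in $\triangle pbr$ closest to $\overleftrightarrow{pb}$. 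Then $pq'abr'$ is a $5$-hole.
\item This $5$-hole has $p$ and the block points $a,b$ as vertices, so it would have been assigned to $p$ instead of $H$, which is a contradiction.
\end{enumerate}
In the example above, this is exactly the configuration that rules it out: with $q=x$ and $r=y$, the pentagon $pxaby$ is convex.
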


\begin{proof}
    It suffices to study those elements of $P$ which are not vertices of $H$ but have $H$ assigned to them (as the vertices of $H$ form a set in convex position). We divide the set of all these vertices into $20$ subsets. First, we divide into 10 subsets, according to which two vertices $a$ and $b$ of $H$ are the anchors of the assignment. 
    Then, we divide each of these $10$ subsets into two smaller subsets: the subset $P_{ab}$, which is the set of points $p$ that have $a$ and $ b$ as anchors and lie on the half-plane determined by $\overleftrightarrow{ab}$ for which $\measuredangle apb<\pi$, and the subset $P_{ab}$, which is the set of points (also with anchors $a$ and $b$) which lie on the other half-plane determined by $\overleftrightarrow{ab}$.
    
    Consider one of these $20$ subsets, say, $P_{ab}$. Let $R$ be the convex hull of $P_{ab}$. Suppose that there is a point $p\in P_{ab}$ which belongs to the interior of $R$. As the triangle $\triangle pab$ is empty and all points in $P_{ab}$ are on the same side with respect to $\overleftrightarrow{ab}$, it is clear that $\overrightarrow{pa}$ and $\overrightarrow{pb}$ must intersect the same side of the convex hull $R$ (otherwise, some vertex of $R$ would lie within $\triangle pab$). Let $q$ and $r$ be the endpoints of the side that these two rays intersect. Notice that $p$, $q$, $a$, $b$ and $r$ are the vertices of a convex pentagon and suppose, without loss of generality, that the points appear in that order around the pentagon's boundary---it is clear that the only other possible option is $prabq$. Define $q'$ (resp. $r'$) as the point in $P$ with the smallest non-zero Euclidean distance to $\overleftrightarrow{pa}$ (resp. $\overleftrightarrow{pb}$) that is contained in the triangle $\triangle pqa$ (resp. $\triangle pbr$). Then, $pq'abr'$ is a $5$-hole (see Figure~\ref{fig:convex_arc_partition}). This is not possible, as then $H$ would not have been assigned to $p$ in its respective block, because there is a $5$-hole which contains $p$ and at least one point from that block as vertices. Thus, all points in $P_{ab}$ belong to the boundary of $R$, which means that $P_{ab}$ is in convex position, as desired.
\end{proof}

\begin{figure}[h!t]
    \centering
    \includegraphics[width=.6\linewidth]{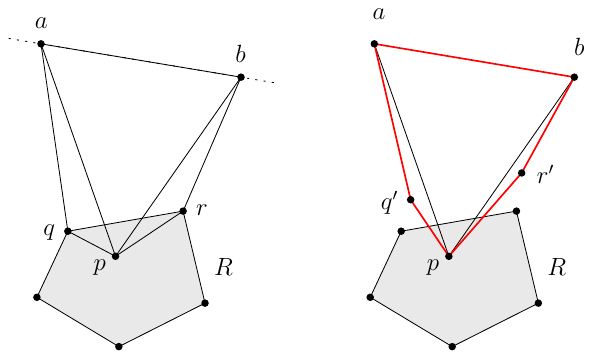}
    \caption{On the left, we see the convex hull $R$ of $P_{ab}$. If $p\in P$ is a point in the interior of $R$, then the segments $\overline{pa}$ and $\overline{pb}$ must cross the same side of $R$. It follows that $pqabr$ is a convex pentagon. As in the proof of Theorem~\ref{thm:layers}, there must exist some $5$-hole of the form $pq'abr'$.}
    \label{fig:convex_arc_partition}
\end{figure}

The tools we have developed thus far already allow us to improve upon Theorem~\ref{thm:aich}.

\begin{theorem}\label{thm:preliminary_bound}
    Every set $P$ of $n$ points in general position on the plane induces at least $\Omega(n^{4/3})$ $5$-holes.
\end{theorem}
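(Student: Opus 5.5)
Let $h$ denote the number of $5$-holes of $P$. The argument plays Theorem~\ref{thm:layers} off against Lemma~\ref{lem:assigned_points_convex}, combined with the Erdős--Szekeres theorem, and then balances the two bounds.

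First, Theorem~\ref{thm:layers} gives $h\geq \Omega(n^2/L(P))$. So if $L(P)\leq n^{2/3}$ we already have $h\geq\Omega(n^{4/3})$, and we may assume $L(P)> n^{2/3}$.

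Suppose now that $L(P)$ is large. Recall from the proof of Theorem~\ref{thm:layers} that every point $p\in\ell_1\cup\dots\cup\ell_{k_{\operatorname{mid}}}$ receives at least $n/80$ distinct $5$-holes, and that these layers contain at least $n/2$ points. So the total number of (point, hole) incidences is at least $\Omega(n^2)$. The goal is to bound the number of points to which a single $5$-hole $H$ can be assigned by some quantity $s$, which then gives $h\geq\Omega(n^2/s)$.

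By Lemma~\ref{lem:assigned_points_convex}, the points assigned to $H$ split into $O(1)$ sets in convex position. A set of $s$ points in convex position has only $O(1)$ layers in the sense that it is itself a single convex chain. However, the points assigned to $H$ come from the layer decomposition of $P$, and the proof of Theorem~\ref{thm:layers} shows that $H$ receives at most $45$ points from each layer. So a convex-position subset $C$ of assigned points must meet at least $|C|/45$ distinct layers.

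The key geometric step is that a convex-position set cannot meet too many layers. I would try to show that if $C$ is in convex position and meets $t$ distinct layers, then the points of $C$ on inner layers witness many points of $P$ outside the convex chain. Alternatively, I would order $C$ along its hull and pick out a monotone subsequence with respect to layer index (Erdős--Szekeres gives length $\sqrt{t}$). Along such a subsequence, each consecutive pair of points lies on different nested convex layers, which forces an empty region. Combined with the anchor structure, this region should produce extra $5$-holes, by a pentagon $pq'abr'$ argument as in the proofs above.

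The quantitative form I aim for is this: if $H$ is assigned to $s$ points, then at least roughly $s^2$ other $5$-holes are created nearby, or $s\leq O(\sqrt{n})$. Putting the bounds together, either $h\geq n^2/L$, or each hole has $s\lesssim L^{1/2}$ assignments, giving $h\geq n^2/\sqrt{L}$. Alternatively one can use the bound $h\geq \Omega(|\ell_1|n)$ applied to subsets: delete the first layers repeatedly, with each peeled layer contributing $|\ell_i|\cdot(n/2)$ holes inside the remaining set (after accounting for multiplicity). Balancing $n^2/L$ against a bound of the type $nL^{1/2}$ gives $L=n^{2/3}$ and $h\geq n^{4/3}$.

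The second option is concrete. Peel the first $L/2$ layers. If the bound $h\geq \Omega(|\ell_1|\cdot n)$ holds for each $P_i=\ell_i\cup\dots$, then we get $\Omega(|\ell_i| n)$ holes for each $i\leq k_{\operatorname{mid}}$, but holes may be recounted across $i$. The main obstacle is controlling this overcount: a single $5$-hole could be assigned from many layers. This is where Lemma~\ref{lem:assigned_points_convex} enters. The assigned points form $O(1)$ convex chains, and a convex chain can meet nested layers only in a limited way. Indeed, a convex chain inside the nested convex polygons $\ell_i$ visits layers in a unimodal fashion, and a point of the chain on layer $j$ sits in the chain's hull, which contains about $j$ layers' worth of points. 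I expect a counting via Erdős--Szekeres/unimodality to yield that each hole is counted $O(\sqrt{L})$ times or fewer, up to the per-layer bound of $45$. That would give $h\geq n^2/\sqrt{L}$ in the many-layers case, which exceeds $n^{4/3}$ when $L\le n^{4/3}$, and this always holds since $L\leq n$. Making this multiplicity bound rigorous is the step I expect to be hardest.
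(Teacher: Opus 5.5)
Your proposal does not yet contain a proof, because the step that produces the bound is left as an expectation. You correctly set up the double count: there are at least $\Omega(n^2)$ assignments, so $h\ge\Omega(n^2/M)$, where $M$ is the largest number of points to which a single $5$-hole is assigned. You then try to bound $M$ a priori, by $O(\sqrt{L})$ or $O(\sqrt n)$. For this you rely on three ingredients: a claim that a convex-position set of assigned points cannot meet many layers, an Erd\H{o}s--Szekeres/unimodality count, and a $pq'abr'$-type construction. None of these is carried out. The geometric claim, as stated, is also false. Let $P$ consist of points placed densely on concentric circles, one layer per circle. On each circle, also place a point lying on a fixed huge circle that passes through the innermost one. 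These extra points are in convex position and meet every layer. Your sketch gives no mechanism by which the structure of points assigned to one hole would exclude this. The reduction to $L(P)>n^{2/3}$ and the layer-peeling do not address the problem either.

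The missing idea is that you need no bound on $M$. The large-multiplicity case is itself a source of many $5$-holes, via the \emph{second} part of Theorem~\ref{thm:layers} applied to a subconfiguration.
\begin{itemize}
\item If a $5$-hole is assigned to $M$ points, Lemma~\ref{lem:assigned_points_convex} gives $Q\subseteq P$ in convex position with $|Q|\ge\Omega(M)$.
\item Let $\widetilde Q=P\cap\operatorname{conv}(Q)$. Then $Q$ is the first layer of $\widetilde Q$.
\item So $\widetilde Q$ induces $\Omega(|Q|\cdot|\widetilde Q|)\ge\Omega(M^2)$ $5$-holes.
\item Each of these is a $5$-hole of $P$, because every point of $P$ in $\operatorname{conv}(Q)$ already lies in $\widetilde Q$.
\end{itemize}
Hence $h\ge\Omega(\max\{n^2/M,M^2\})\ge\Omega(n^{4/3})$, with the balance at $M=n^{2/3}$. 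This is exactly the ``roughly $s^2$ other $5$-holes'' alternative you were aiming for. It follows in one line from results already in hand, rather than from a new geometric argument.
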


\begin{proof}
    Our starting point is the assignment introduced during the proof of Theorem~\ref{thm:layers}. Let $M$ denote the largest integer such that there is a $5$-hole assigned to $M$ elements of $P$. Then, since there are at least $\Omega(n^2)$ assignments overall, the number of $5$-holes must be lower bounded by $\Omega(n^2/M)$. 
    
    On the other hand, Lemma~\ref{lem:assigned_points_convex} tells us that if there is a $5$-hole which has been assigned to $M$ points, then $P$ must contain a subset of at least $\Omega(M)$ points which are in convex position. Let $Q\subseteq P$ be such a subset, and let $\widetilde Q\subseteq P$ denote the subset of $P$ which consists of all those points contained in the convex hull of $Q$. Note that $Q\subseteq \widetilde Q$; in fact, $Q$ is the first layer of $\widetilde Q$. Now, Theorem~\ref{thm:layers} implies the existence of at least $\Omega(|Q|\cdot|\widetilde Q|)\geq\Omega(M^2)$ $5$-holes. Putting things together, we conclude that $P$ induces at least \[\Omega\left(\max\{n^2/M,M^2\}\right)\] $5$-holes, and the $\Omega(n^{4/3})$ lower bound follows.
\end{proof}

\subsection*{Refined assignment of 5-holes} The arguments used throughout the rest of the paper will be of similar flavor to the ones that have appeared above. However, to further improve the lower bound for $h_5(n)$, we will need to consider a modified version of the assignment of pentagons from Section~\ref{sec:layers}. Thus far, the blocks $B_1,\dots,B_{10\lfloor m/10\rfloor}$ corresponding to a point $p$ in layer $\ell_i$ ($i\leq k_{\operatorname{mid}}$) have been constructed by simply ordering all points in $P_i=\ell_i\cup\dots\cup\ell_{L(P)}$ radially around $p$ and then creating a new block every $10$ points. However, it will be convenient to consider more general families of blocks, where the blocks are pairwise-disjoint and each of them still consists of $10$ consecutive points in the radial order around $p$, but we are now allowed to have some points scattered throughout the radial order which do not belong to any of the blocks. Note that as long as the number of blocks constructed along the radial order of $p$ is at least $\Omega(n)$ for every $p$ in the first $k_{\operatorname{mid}}$ layers of $P$, all of our previous arguments apply almost verbatim. The way in which we assign a $5$-hole to $p$ for each block will also need to be made more precise, but first we require some definitions.

For a $5$-hole $H$ assigned to a point $p$ in layer $\ell_i$ which is not among the vertices of $H$, the \textit{sector} of $H$ with respect to $p$ is the totally ordered set consisting of all points of $P_i$ which either belong to $H$ or, in the radial order with respect to $p$, have at least one vertex of $H$ before them and at least one vertex of $H$ after them (see Figure~\ref{fig:sectors}a); the elements of the sector are ordered according to the radial order around $p$. Recall that every $5$-hole which is assigned to $p$ but does not have $p$ as a vertex must be contained in a block of $10$ consecutive points of $P_i$ when the set is ordered radially with respect to $p$. In particular, no sector as above can contain more than $10$ points.

Given a $5$-hole $H$ assigned to a point $p$ not in $H$, we say that this assignment is \textit{bad} if the points in the sector $Q$ of $H$ with respect to $p$ are in convex position and, furthermore, the convex hull of $Q\cup\{p\}$ is a triangle (see Figure~\ref{fig:sectors}b). We say that an assignment is \textit{good} if it is not bad (this includes the case where $p$ is a vertex of $H$).  A good assignment of a $5$-hole $H$ to some point $p$ not in $H$ will be called \textit{dubious} if the convex hull of $H\cup\{p\}$ is a triangle. A good assignment will be called \textit{trustworthy} if it is not dubious.

\begin{figure}[h]
    \centering
    \includegraphics[width=.47\linewidth]{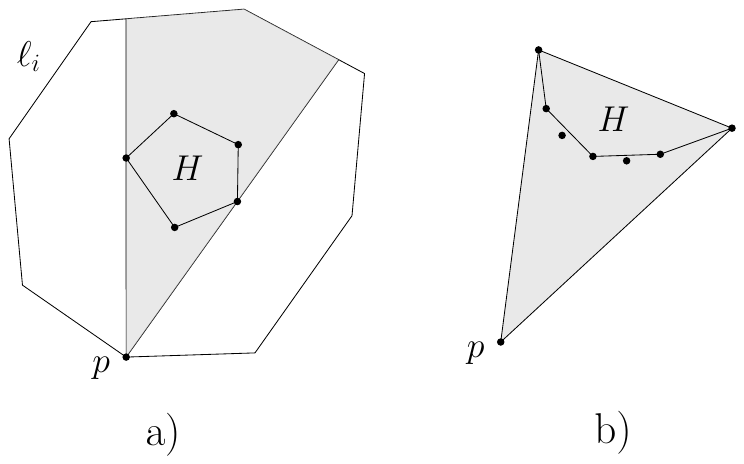}
    \caption{a) A point $p$ in layer $\ell_i$ and a $5$-hole $H$ assigned to $p$. The sector of $H$ with respect to $p$ consists of those points of $P$ which lie in the shaded region. b) The points in the sector of $H$ with respect to $p$ are in convex position. Furthermore, the convex hull of $H\cup\{p\}$ is a triangle. These are the properties that define a bad assignment.}
    \label{fig:sectors}
\end{figure}

We can now describe the refined assignment of $5$-holes, as promised. Let $p$ be a point in some layer $\ell_i$ with $i\leq k_{\operatorname{mid}}$, and let $B_1,\dots,B_t$ be some pre-selected disjoint blocks each of which consists of $10$ elements of $P_i\backslash\{i\}$ which are consecutive in the radial order of $P_i$ with respect to $p$. For each $B_i$, we assign to $p$ a $5$-hole as follows: If there is some $5$-hole which has both $p$ and some element of $B_i$ among its vertices, then we simply assign it to $p$ (as we had done earlier). However, if there is no such $5$-hole, we assign a $5$-hole to $p$ according to the rules we describe next. By Lemma~\ref{lem:pentagon_selection}, the family $F$ of $5$-holes within $B_i$ which have two vertices that together with $p$ form an empty triangle has at least one element. Out of all elements of $F$, we first try to assign to $p$ a $5$-hole which would generate a good and trustworthy assignment. If this fails, we then try to find a good but dubious assignment (still among the $5$-holes in $F$). If this is still impossible, we then chose any element of $F$ and assign it to $p$. In other words, in case there is no $5$-hole with $p$ and some element of $B_i$ among its vertices, the assignment prioritizes good assignments over bad ones, and trustworthy assignments over dubious ones. The anchors of any assignment where $p$ is not a vertex of the $5$-hole are still defined as before.

Note that we have not yet specified how the blocks $B_1,\dots,B_t$ are to be selected for any of the points in the first $k_{\operatorname{mid}}$ layers. This will also require some care, and the details are postponed until the end of this section (see Proposition~\ref{thm:concavity_yields_good_hole}). For now, however, we move on to studying some properties that the assignment satisfies regardless of the choice of blocks.

\subsection*{Studying the refined assignment}

While Lemma~\ref{lem:assigned_points_convex} already yields some powerful structural information for configurations where a single $5$-hole has been assigned to many points, it turns out that we can go even further in this direction. Pursuing this will eventually lead to claim~\ref{claim:few_good_assignments} and Proposition~\ref{thm:concavity_yields_good_hole}---the main results of this section---which respectively tell us that no $5$-hole can be involved in more than $O(1)$ assignments, and that it is possible to produce many good assignments of $5$-holes, unless the structure of $P$ is "simple". We begin with some preparatory results.

\begin{proposition}\label{prop:first_structure}
For every positive integer $k$ there exists some other positive integer $K_1=K_1(k)$ such that, among any $K_1$ points to which $H$ has been assigned, we can find a subset $S$ of $k$ points for which the following properties are satisfied:
\begin{itemize}
    \item all elements of $S$ are assigned to $H$ via the same anchors $a$ and $b$ and, for any two distinct vertices $a'$ and $b'$ of $H$, all elements of $S$ lie on the same side of $\overleftrightarrow{a'b'}$;

    \item the elements of $S$ are in convex position;

    \item there is a labeling $s_1,\dots,s_{k}$ of the elements of $S$ so that $s_1\dots s_k$ is a convex polygon (in that order) and, for $1\leq i<j\leq k$, every vertex $h$ of $H$ satisfies $\measuredangle s_is_jh<\pi$.
\end{itemize}
\end{proposition}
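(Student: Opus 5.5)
We will obtain $S$ by repeated pigeonholing followed by an application of Ramsey-type arguments (Erdős–Szekeres style), shrinking the set at each stage by a factor depending only on $k$. Start with $K_1$ points to which $H$ has been assigned. At most $5$ of them are vertices of $H$, so discard those. Every remaining point $p$ has been assigned $H$ via some pair of anchors. There are $\binom{5}{2}=10$ choices of anchors, so we may keep a subset of size at least $(K_1-5)/10$ sharing the same anchors $a,b$. Next, for each of the $10$ lines $\overleftrightarrow{a'b'}$ spanned by vertices of $H$, record the side on which each point lies. No point lies on such a line, by general position. This gives at most $2^{10}$ sign patterns, so a further pigeonhole leaves a subset of size at least $(K_1-5)/(10\cdot 2^{10})$ satisfying the first bullet. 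In particular all of these points lie on the same side of $\overleftrightarrow{ab}$, so they form a subset of one of the $20$ classes from the proof of Lemma~\ref{lem:assigned_points_convex}. That lemma then shows the remaining set $S_0$ is already in convex position, which gives the second bullet for any subset of it.

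For the third bullet, label the points of $S_0$ in the cyclic order around the boundary of their convex hull. The task is then to choose $k$ of them and a starting point of the labeling $s_1,\dots,s_k$ (with orientation possibly reversed) so that for every $i<j$ and every vertex $h$ of $H$ we have $\measuredangle s_is_jh<\pi$. The condition says that $h$ lies to the right of the oriented line through $s_j$ and $s_i$, in the clockwise sense fixed by the angle convention. In other words, all of $H$ lies on a prescribed side of the chord $s_is_j$ of the convex polygon.

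The key observation is that all points of $S_0$ lie in a single region cut out by the arrangement of the lines through pairs of vertices of $H$. In particular they lie in one "visibility cone" relative to $H$: either every point of $S_0$ lies in the same half-plane with respect to some supporting line of $H$, or $H$ sits inside the convex hull of $S_0$. I expect the latter case to be excluded, or at least manageable. The reason is that every $p\in S_0$ sees the edge $ab$ through an empty triangle $\triangle pab$, and all points are on the same side of $\overleftrightarrow{ab}$. Hence $H$ lies outside the convex polygon $\operatorname{conv}(S_0)$, or at least the segment $ab$ does.

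Granting that $H$ is disjoint from $\operatorname{conv}(S_0)$, we separate them by a line. The points of $S_0$ facing $H$ form the part of the polygon's boundary visible from $H$. A chord $s_is_j$ of a convex polygon has the whole polygon, minus the boundary arc between $s_i$ and $s_j$, on one side. So $H$ is on the correct side of all chords exactly when the chosen points form an arc of the boundary ordered monotonically toward or away from $H$. To obtain this, we use the fact that every line through two vertices of $H$ has all of $S_0$ on one side. Order the points of $S_0$ by angle as seen from a fixed interior point of $H$; this order is well defined because $S_0$ lies in a cone avoiding $H$. Along this order the boundary of $\operatorname{conv}(S_0)$ splits into a "near" chain and a "far" chain relative to $H$. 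By pigeonhole, keep the larger chain, of size at least $|S_0|/2$.

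Within one chain, take the points in angular order as $s_1,\dots,s_k$, reversing if needed. Each chord $s_is_j$ with $i<j$ then has $H$ on the same side, because $H$ lies beyond the chain in a consistent direction. Checking this uniformly for all five vertices is the step I expect to be the main obstacle. The difficulty is that a vertex $h$ could lie on the opposite side of a long chord's line, even though it is beyond the chain.

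If this direct geometric argument fails, the fallback is a Ramsey argument. Color each pair $i<j$ of the convex-order labeling by the $2^5$-tuple of orientations $\bigl(\operatorname{sign}\measuredangle s_is_jh\bigr)_h$. The multicolor Ramsey theorem then gives a homogeneous subset of size $k'$, for $K_1$ large enough. In a homogeneous subset, if some $h$ has the wrong orientation on all pairs, then $h$ lies on the wrong side of every chord. For a convex $k'$-gon this would force $h$ to be inside the polygon or in an impossible position. That contradicts $\triangle pab$ being empty with $p$ a polygon vertex, after reversing the labeling to swap orientation. This yields the third bullet, with $K_1(k)$ depending only on $k$.
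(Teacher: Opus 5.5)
Your treatment of the first two bullets is correct and follows the paper: pigeonhole on the anchors and on the cell of the arrangement of the ten lines spanned by $H$, then Lemma~\ref{lem:assigned_points_convex}.

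The gap is in the third bullet, and it is not a technicality. The facts you rely on there are convex position, $S_0$ lying in one cell, $H$ being disjoint from $\operatorname{conv}(S_0)$, and emptiness of each $\triangle pab$. These do not imply the third bullet even for two points. Put $S_0$ on the arc $y=\varepsilon(1-(x-\tfrac12)^2)$, $0\le x\le 1$, with $\varepsilon=0.01$. Let $H$ have vertices $(9,1),(11,1),(12,0),(11,-1),(9,-1)$, with anchors $a=(9,1)$ and $b=(11,1)$.
\begin{itemize}
\item None of the ten lines spanned by $H$ meets the region $0\le x\le 1$, $|y|<1$, so $S_0$ lies in a single cell.
\item Each $\triangle sab$ contains no other point, because its lower side $\overline{sb}$ has slope at least $0.09>\varepsilon$.
\item Yet every line through two points of $S_0$ has slope of absolute value less than $\varepsilon$. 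So it separates $(9,1)$ from $(9,-1)$, and no pair satisfies $\measuredangle s_is_jh<\pi$ for all $h$ under either labeling.
\end{itemize}

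In your Ramsey step this is exactly the mixed case. Both homogeneous types are geometrically realizable. One means $h$ lies beyond every edge $s_is_{i+1}$ with $i<k'$. The other means $h$ lies beyond the closing edge $s_1s_{k'}$ and on the inner side of all other edges. Neither is an ``impossible position''. Reversing the labeling flips all five vertices at once, so it cannot repair a set in which $(9,1)$ is always on the correct side and $(9,-1)$ always on the wrong one. Your near/far chain argument breaks down for the same reason, as you suspected.

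What is missing is input from the assignment rule itself, which the paper uses twice.

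First, $H$ lies in a block of $10$ consecutive points in the radial order around $s$. Hence at most five points of $\tilde S\setminus\{s\}$ can lie in $\operatorname{conv}(H\cup\{s\})$. The paper encodes this as a digraph ($s\to s'$ when $\overrightarrow{ss'}$ meets the interior of $H$) of out-degree at most $5$, and passes to an independent set of linear size. This is what rules out the example above, where every point of $S_0$ has all points to its right inside $\operatorname{conv}(H\cup\{s\})$.

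Second, suppose $\operatorname{conv}(H\cup\{s\})$ still meets the interior of $R=\operatorname{conv}(S)$. Since $s$ is the only point of $S$ in it, the two extreme segments from $s$ to $H$ cross the same side $\overline{qr}$ of $R$, so $sqabr$ is a convex pentagon. The usual $sq'abr'$ construction then gives a $5$-hole through $s$ and a point of its block. That $5$-hole would have been assigned to $s$ in preference to $H$, a contradiction.

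Once $\operatorname{conv}(H\cup\{s\})$ avoids the interior of $R$ for every $s\in S$, all of $H$ sees every edge of $R$ except one common edge. Labeling $S$ from one endpoint of that edge to the other then gives the third bullet.
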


See Figure~\ref{fig:forbidden_configuration}a for an example where the three properties in the statement are satisfied.

\begin{proof}
If there are $K_1$ points as in the statement of the lemma, then we can choose at least $(K_1-5)/20$ of them which are all assigned to $H$ via the same anchors $a$ and $b$, and which lie on the same side of $\overleftrightarrow{ab}$. Given that any $9$ lines split the plane into at most $46$ regions, we can pass to a further subset $\tilde S$ of at least $(K_1-5)/(20\cdot 46)$ points, all of which lie on the same side of $\overleftrightarrow{a'b'}$ for any two vertices $a'$ and $b'$ of $H$.  As we saw in the proof of Lemma~\ref{lem:assigned_points_convex}, the elements of $\tilde S$ must be in convex position, and thus $\tilde S$ satisfies the first two required properties. We will select a subset $S\subseteq\tilde S$ of size at least $|\tilde S|/6$ which also satisfies the third required property.

We construct an auxiliary directed graph $G_{\tilde S}$ with vertex set $\tilde S$ and a directed edge from $s$ to $s'$ if and only if the ray $\overrightarrow{ss'}$ intersects the interior of $H$. Suppose that $s$ and $s'$ are such that there is a directed edge from $s$ to $s'$, and that $s\in \ell_{i}$. The point $s'$ must belong to the convex hull of $H\cup\{s\}$, or else $s'$ would lie on the opposite side of $s$ with respect to at least one of the lines determined by two vertices of $H$. In particular, this tells us that $s'\in P_i$. Since all vertices of $H$ belong to a block of $10$ consecutive points in the radial order of $P_i$ with respect to $s$ and no element of $\tilde S$ is a vertex of $H$, there are at most five elements of $\tilde S$ within $H\cup\{s\}$, other than $s$ itself. Hence, every vertex of $G_{\tilde S}$ has at most $5$ out-going edges. A straightforward inductive argument (or, alternatively, a greedy algorithm) can now be used to show that there exists a subset $S\subseteq \tilde S$ of size at least $|\tilde S|/6$ such that there is no directed edge joining two elements of $S$. Overall, we have that $|S|\geq \Omega(K_1)$.

Recalling that $S$ is in convex position, it is not hard to see that the only way $S$ could fail to satisfy the third property in the statement is if there is some point $s\in S$ such that the convex hull of $H\cup\{s\}$ intersects the interior of the convex hull $R$ of $S$. Let $h_1$ and $h_2$ be the first and last points in the radial order of $H\cup\{s\}$ with respect to $s$ ($h_1$ and $h_2$ are vertices of $H$). That the convex hull of $H\cup\{s\}$ intersects the interior of $R$ implies that at least one of the segments $\overline{sh_1}$ and $\overline{sh_2}$ intersects the interior of one of the sides of $R$. In fact, by our choice of $S$, $s$ is the only element of $S$ within the convex hull of $H\cup\{s\}$; this is only possible if both of these segments actually cross the same side $\overline{qr}$ of $R$. In this configuration, we get that $sqabr$ is a convex pentagon (it could also happen that the convex pentagon is $srabq$, but this case is symmetric). See Figure~\ref{fig:forbidden_configuration}b. As in the proof Lemma~\ref{fig:convex_arc_partition}, we can now find a $5$-hole of the form $sq'abr'$, which should have been assigned to $s$ instead of $H$. See Figure~\ref{fig:forbidden_configuration}c. This contradiction shows that $S$ satisfies also the third property in the statement, as required.
\end{proof}

\begin{figure}[h]
    \centering
    \includegraphics[width=.725\linewidth]{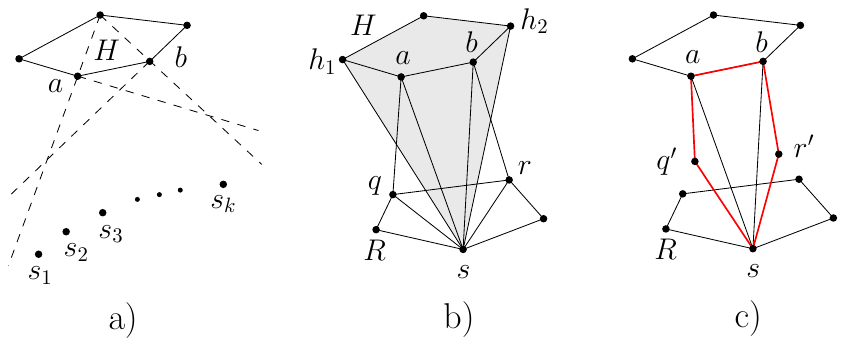}
    \caption{a) An example where the elements of $S$ satisfy all properties in the statement of Proposition~\ref{prop:first_structure}.
    b) The convex hull of $H\cup\{s\}$ has been shaded. The segments $\overline{sh_1}$ and $\overline{sh_2}$ both cross the same side $\overline{qr}$ of $R$, and thus $sqabr$ is a convex pentagon. c) We can find a $5$-hole of the form $sq'abr'$ (shown in red). It could occur that $q'=q$ and/or $r=r'$.}
    \label{fig:forbidden_configuration}
\end{figure}

        By further strengthening Proposition~\ref{prop:first_structure}, we obtain the following.

\begin{proposition}\label{prop:second_structure}
For every positive integer $k$ there exists some positive integer $K_2=K_2(k)$ such that, among any $K_2$ points to which $H$ has been assigned, we can find a subset $S$ of at least $k$ points for which: 
\begin{itemize}
    \item the three properties in the statement of Proposition~\ref{prop:first_structure} are satisfied;

    \item the sectors of $H$ with respect to every element of $S$ are all identical (including the order).
\end{itemize}
\end{proposition}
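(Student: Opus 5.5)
The plan is a pigeonhole argument followed by a geometric rigidity step. First take $K_2 = K_1(k')$ for a suitably large $k'$ depending on $k$, and apply Proposition~\ref{prop:first_structure} to obtain a set $S'$ of $k'$ points satisfying its three properties. Every element of $S'$ is not a vertex of $H$, so its sector with respect to that point is well defined. Each such sector is a totally ordered set of at most $10$ points, since it lies inside one block of $10$ consecutive points in the radial order. It contains the five vertices of $H$ and possibly some further points of $P$.

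The key claim is that, across all of $S'$, these sectors are drawn from a bounded family of candidates. For $s\in S'$, the sector consists of the vertices of $H$ together with the points of $P_i$ lying in the angular wedge at $s$ spanned by the first and last vertices $h_1(s),h_2(s)$ of $H$ in the radial order around $s$. By the first property, all elements of $S'$ lie in the same cell of the arrangement of the lines through pairs of vertices of $H$. Hence the pair $(h_1(s),h_2(s))$ is the same for every $s\in S'$, and so is the radial order of the vertices of $H$ around $s$.

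The extra points of the sector are the points of $P$ in the region $W_s\setminus H$, where $W_s$ is the wedge. Away from the vertices of $H$, this region splits into two parts. One part lies between $s$ and $H$; the third property of Proposition~\ref{prop:first_structure} should keep it free of the other points of $S'$, and in fact these points are bounded in number by the block condition. The other part lies beyond $H$, near the sides adjacent to $h_1,h_2$. In every case there are at most $5$ such extra points, since the sector has at most $10$ elements. So every sector is a subset of size at most $10$ of the set $U=\bigcup_{s\in S'}(\text{sector of }s)$, which may be large. This is why direct pigeonholing is not immediate.

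To handle this, I would order $S'$ along its convex chain $s_1,\dots,s_{k'}$ given by the third property and show monotonicity. As $s$ moves along the chain, the wedge $W_s$ rotates monotonically about $H$, because $\measuredangle s_is_jh<\pi$ for all vertices $h$. Consequently the membership of any fixed point $x\notin H$ in $W_s$ changes at most a bounded number of times along the chain. This lets me apply Erd\H{o}s--Szekeres-type or pigeonhole arguments on sequences. Since each sector contains at most $5$ extra points, only boundedly many points ever lie in many sectors simultaneously. With $k'$ chosen large enough in terms of $k$, I can then extract a long contiguous run of the chain on which the set of extra points is constant.

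Once the sets coincide, the orders also coincide. The radial order around $s$ of a fixed set of at most $10$ points is determined by which side of each line $\overleftrightarrow{xy}$ the point $s$ lies on, for $x,y$ in the set. Pigeonholing over these at most $\binom{10}{2}$ lines, each with two sides, costs only a constant factor.

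The main obstacle is the monotonicity step: showing rigorously that the extra points of the sectors cannot vary wildly along the chain. If that fails, the fallback is to exploit emptiness. For two elements $s,s'$ with different extra points, the anchored empty triangle $\triangle sab$ together with the convex-chain structure would let me build a $5$-hole through $s$ and a block point, via the $q',r'$ construction of Lemma~\ref{lem:assigned_points_convex}. That would contradict how $H$ was assigned, and so bound the number of distinct sectors by a constant.
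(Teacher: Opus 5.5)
\textbf{There is a genuine gap, at exactly the step you flagged as the main obstacle.} Your main line uses only geometric information: the common cell of the line arrangement, the convex chain from the third property of Proposition~\ref{prop:first_structure}, and the bound of $5$ extra points per sector. That information cannot force two sectors to coincide.

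Take $H$ small and the chain $s_1,\dots,s_{k'}$ spread out, so that the wedges $W_{s_j}$ eventually separate beyond $H$. Place one point $q_j$ beyond $H$ inside $W_{s_j}$ but inside no other wedge. Then every point's membership changes at most twice along the chain, and each sector has one extra point. Yet all $k'$ sectors are pairwise distinct. So no pigeonhole or Erd\H{o}s--Szekeres argument built on ``membership changes boundedly often'' can produce even two equal sectors, let alone a long contiguous run. The same problem arises for points inside the convex hull of $H\cup\{s_i\}$: a point of $P$ very close to $s_i$ lies in no other wedge. What excludes these configurations is the priority structure of the refined assignment, which your main argument never uses.

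The paper uses that priority structure in three steps.
\begin{enumerate}
\item \emph{Nearby points are monotone along the chain.} Call $q\notin H$ nearby for $s_i$ if it lies in the convex hull of $H\cup\{s_i\}$. Suppose $q$ is nearby for $s_i$ with $\measuredangle bs_iq<\pi$, but this fails for some later $s_j$. Then $q\in\angle s_is_ja$, and $s_1abqs_i$ is a convex pentagon. This yields a $5$-hole $s_1'abq's_i$ through $s_i$, which would have been assigned in preference to $H$. Hence every nearby point is nearby for $s_1$ or for the last point of the chain. That leaves at most $10$ candidates, and pigeonholing gives many points with identical ordered nearby sectors.
\item \emph{The assignments are bad or dubious.} Take two such points $s,s'$. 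No interior vertex of the common nearby sector can have $\measuredangle p_{i-1}p_ip_{i+1}>\pi$, since otherwise $sp_{i-1}p_ip_{i+1}s''$ is a $5$-hole through $s$. Consequently every one of these assignments is bad or dubious.
\item \emph{There are no away points.} An away point $q$ produces a $5$-hole $p_jp_{j+1}p_{j+2}p_{j+3}q'$. Its assignment to $s$ would be good and trustworthy, contradicting the trustworthy-over-dubious priority.
\end{enumerate}

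Your fallback, building a $5$-hole through $s$ and a block point, is the right mechanism for the first step. It does not handle away points. Once the nearby chain bends toward $s$, a $5$-hole through $s$ need not exist. The contradiction has to come from the good/dubious/trustworthy hierarchy, and that hierarchy only becomes usable after the second step.
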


\begin{proof} By making $K_2$ sufficiently large, we may assume that there is a set $\tilde{S}$ of $10^{10}k$ points for which all three properties in the statement of Proposition~\ref{prop:first_structure} hold. Let $a$ and $b$ be the anchors of all assignments of $H$ to the elements of $\tilde S$, and label the elements of $\tilde S$ as $s_1,\dots,s_{10^{10}k}$ so that the third property in the statement of Proposition~\ref{prop:first_structure} holds. Without loss of generality, we assume that $\measuredangle as_ib<\pi$ for every $i$. 

For every $1\leq i\leq 10^{10}k$, each element $q\not\in H$ of the sector of $H$ with respect to $s_i$ will be classified as being either \textit{nearby} or \textit{away} with respect to $s_i$, depending on whether $q$ is contained in the convex hull of $H\cup\{p\}$ or not, respectively. By the first property in the statement of Proposition~\ref{prop:first_structure}, if some point $q\notin H$ is in the sector of $H$ with respect to two points $s_i$ and $s_j$, then it is either nearby with respect to both or away with respect to both. Since $\triangle s_iab$ is empty for every $s_i$, every point $q\notin H$ which is nearby with respect to $s_i$ can be seen to satisfy either $\measuredangle qs_ia<\pi$ or $\measuredangle bs_iq<\pi$ (but not both).

Suppose that the point $q\notin H$ is nearby with respect to some $s_i$ with $i>2$, and that $\measuredangle bs_iq<\pi$. Consider any $s_j$ with $j>i$. We claim that $q$ is nearby with respect to $s_j$, and that $\measuredangle bs_jq<\pi$. Notice that $q$ belongs to the convex hull of $H\cup\{s_i\}$. Thus, by the third property in the statement of Proposition~\ref{prop:first_structure}, it must be the case that $q\in\angle bs_is_j$. For the sake of contradiction, suppose that $\measuredangle qs_jb<\pi$ (or, equivalently, that $\measuredangle bs_jq>\pi$). Since $\triangle s_jab$ is empty, this can only occur if $q\in\angle s_is_ja$ (see Figure~\ref{fig:s1sisj}). As $q$ is in the sector of $H$ with respect to $s_i$, we can use the third property from Proposition~\ref{prop:first_structure} with $s_1$ and $s_i$ to conclude that $\measuredangle s_1s_jq<\pi$ and thus $s_1abqs_i$ is a convex pentagon (again, see Figure~\ref{fig:s1sisj}). Let $s_1'$ (resp. $q'$) denote the element of $P$ with the smallest non-zero Euclidean distance to $\overleftrightarrow{s_ia}$ (resp. $\overleftrightarrow{s_ib}$) that is contained in the triangle $\triangle s_1as_i$ (resp. $\triangle s_ibq$). Then, $s_1'abq's_i$ is a $5$-hole. This, however, contradicts our initial assumption that $H$ is assigned to $s_i$, because this $5$-hole would have been assigned to $s_i$ instead of $H$. Thus, $\measuredangle bs_jq<\pi$ must hold. It is not hard to see that $q$ must then belong to the sector of $H$ with respect to $s_j$, and thus must also be nearby with respect to this point. 

\begin{figure}[h]
    \centering
    \includegraphics[width=.42\linewidth]{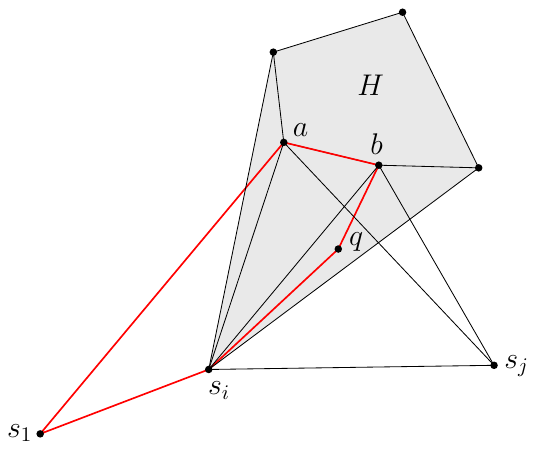}
    \caption{The convex hull of $H\cup\{s_i\}$ has been shaded. The point $q$ must belong to the angular regions $\angle bs_is_j$ and $\angle s_is_ja$. Moreover, since $q$ is in the shaded region, the third property in Proposition~\ref{prop:first_structure} implies that $\measuredangle s_1s_is_j<\pi$. It is then easy to see that $s_1abqs_i$ (shown in red) is a convex pentagon.}
    \label{fig:s1sisj}
\end{figure}

By a symmetric argument, if $q\not\in H$ is nearby with respect to some $s_i$ with $i<10^{10}k$ and $\measuredangle qs_ia<\pi$, then $q$ is also nearby with respect to every $s_j$ with $j<i$, and it must be the case that $\measuredangle qs_ja<\pi$.

In particular, the above discussion implies that if $q\not\in H$ is nearby with respect to some $s_i$ with $1<i<10^{10}k$, then $q$ must be nearby with respect to $s_1$ or $s_{10^{10}k}$ (possibly both). Given that there are at most $5$ points which are nearby with respect to $s_1$ and at most $5$ points which are nearby with respect to $s_{10^{10}k}$, there is a set of at most $10$ points which contains all points $q$ that are nearby with respect to at least one element of $\tilde S$. For each $s_i$, given the sector $Q$ of $H$ with respect to $s_i$, we define the \textit{nearby sector} of $H$ with respect to $s_i$ as the ordered set obtained from $Q$ by ignoring all the away points. By the Pigeonhole-Principle, there is a subset $ S$ of $\tilde S$ of size at least $10^{10}k/(2^{10}10!)>k$ for which the corresponding nearby sectors of $H$ are all identical; let $p_1,\dots,p_t$ be the elements of these nearby sectors, in order. Note that $5\leq t\leq 10$, as the vertices of $H$ must lie among these points.

Next, we show that there is no index $1<i<t$ with $\measuredangle p_{i-1}p_ip_{i+1}>\pi$. Indeed, suppose that $\measuredangle p_{i-1}p_ip_{i+1}>\pi$ and let $s$ and $s'$ be two distinct elements of $S$. Then, without loss of generality, $sp_{i-1}p_ip_{i+1}s'$ is a convex pentagon, in that order. Furthermore, the convex quadrilaterals $sp_{i-1}p_ip_{i+1}$ and $s'p_{i-1}p_ip_{i+1}$ are both empty. Hence, if $s''\in P$ is the point in $\triangle sp_{i+1}s'$ with smallest non-zero Euclidean distance to $\overleftrightarrow{sp_{i+1}}$, then $sp_{i-1}p_ip_{i+1}s''$ is a $5$-hole which would have been assigned to $s$ instead of $H$ (see Figure~\ref{fig:pushing_from_top}a). This is a contradiction. Now, since $\measuredangle p_{i-1}p_ip_{i+1}>\pi$ for all $1<i<t$, the assignment of $H$ to $s_i$ must be either bad or dubious for every $s_i\in S$.

Finally, we claim that there are no away points in the sector of $H$ with respect to any of the elements of $S$, and thus all these sectors must be outright identical. Suppose, for the sake of contradiction, that there exists some point $q$ that is away with respect to some $s\in S$. Then, there exists some index $1\leq j\leq t-3$ with $q\in\angle p_jsp_{j+3}$, and it must be the case that $\measuredangle p_jqp_{j+3}>\pi$. Let $q'\in P$ be the point in $\triangle p_jqp_{j+3}$ with smallest non-zero Euclidean distance to $\overleftrightarrow{p_jp_{j+3}}$. Then, $p_jp_{j+1}p_{j+2}p_{j+3}q'$ is a $5$-hole (see Figure~\ref{fig:pushing_from_top}b). However, the assignment of this $5$-hole to $s$ would be good and trustworthy, while the assignment of $H$ to $s$ is dubious. This contradiction concludes the proof.
\end{proof} 
\begin{figure}[h]
    \centering
    \includegraphics[width=.51\linewidth]{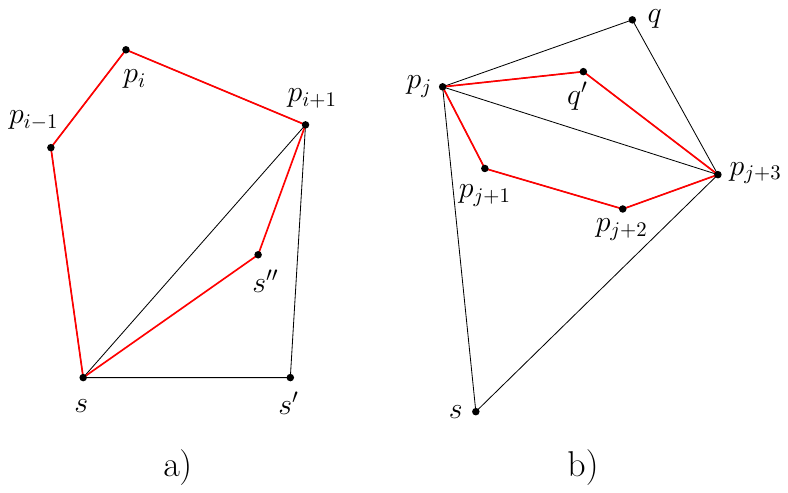}
    \caption{a) The quadrilateral $sp_{i-1}p_ip_{i+1}$ is a $4$-hole, and $sp_{i-1}p_ip_{i+1}s'$ is a convex pentagon. Thus, if $s''$ is the point in $\triangle sp_{i+1}s'$ with smallest non-zero Euclidean distance to $\overleftrightarrow{sp_{i+1}}$, then $sp_{i-1}p_ip_{i+1}s''$ is a $5$-hole. b) The point $q$ lies in $\angle p_{j}sp_{j+3}$. The point $q'$ has the smallest non-zero Euclidean distance to $\overleftrightarrow{p_jp_{j+3}}$ among the points in $\triangle p_jqp_{j+3}$. The pentagon $p_jp_{j+1}p_{j+2}p_{j+3}q'$ (shown in red) is a $5$-hole which induces a trustworthy assignment.}
    \label{fig:pushing_from_top}
\end{figure}

Pushing the ideas in the above proof a little further, we get the following crucial fact.

\begin{claim}\label{claim:few_good_assignments}
    Every $5$-hole is involved in a total of at most $O(1)$ good assignments.
\end{claim}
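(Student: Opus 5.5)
We argue by contradiction, using Proposition~\ref{prop:second_structure} as the main input. Suppose some $5$-hole $H$ is involved in at least $K$ good assignments, where $K$ is a large constant to be fixed. At most $5$ of these are to vertices of $H$, so we may discard them. Taking $K\geq K_2(2)+5$ (or $K_2(3)+5$ if a third point is convenient), Proposition~\ref{prop:second_structure} applied to the remaining points yields a set $S$ of points $s$, each of which has $H$ assigned via a good assignment. These points satisfy the three properties of Proposition~\ref{prop:first_structure}. Moreover, all sectors of $H$ with respect to elements of $S$ are identical, say $p_1,\dots,p_t$ in radial order, with $5\leq t\leq 10$.

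Next we extract the structure already established inside the proof of Proposition~\ref{prop:second_structure}. There are no away points. No index $1<i<t$ has $\measuredangle p_{i-1}p_ip_{i+1}>\pi$, since otherwise a $5$-hole $sp_{i-1}p_ip_{i+1}s''$ would have been assigned to $s$ instead of $H$. Hence the chain $p_1,\dots,p_t$ bends consistently toward $s$ at every interior point.

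The key step is to show that these facts force the sector $Q=\{p_1,\dots,p_t\}$ to be in convex position with $\operatorname{conv}(Q\cup\{s\})$ a triangle. That is precisely the definition of a bad assignment, which contradicts goodness. Every point of $Q$ lies in the sector and hence in $\angle p_1 s p_t$. Because all turns have the same orientation, $p_1p_2\cdots p_t$ is a convex chain. It remains to rule out a chain that bulges away from $s$ so far that $s$, or one of $p_1,p_t$, fails to be a hull vertex. Here I would use the two distinct elements $s,s'\in S$ that share this sector. If the chain did not have all of $p_2,\dots,p_{t-1}$ strictly inside $\triangle sp_1p_t$, some $p_i$ would lie beyond $\overline{p_1p_t}$ as seen from $s$. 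Together with the empty quadrilaterals $sp_{i-1}p_ip_{i+1}$ and the point $s'$, this reproduces the configuration of Figure~\ref{fig:pushing_from_top}a. Taking the point of $P$ closest to the appropriate line inside a triangle then gives a $5$-hole through $s$ and some element of the block. That $5$-hole would take priority, a contradiction.

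The main obstacle is the geometric case analysis in this key step. We must make sure the "closest point to a line" trick really produces an \emph{empty} convex pentagon that either contains $s$ or yields a good trustworthy assignment, which the refined rules would prefer. We must also handle the degenerate case where the extreme points $p_1$ or $p_t$ coincide with anchors. If this direct argument fails, the fallback is a second pigeonhole step: bound by a constant the number of points $s$ whose sector shape is non-convex toward $s$, using the monotonicity argument for nearby points from Proposition~\ref{prop:second_structure}, and conclude that only $O(1)$ good assignments remain.
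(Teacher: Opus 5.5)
Your proposal is correct and is essentially the paper's argument. Proposition~\ref{prop:second_structure} supplies two points with identical sectors, and the push trick of Figure~\ref{fig:pushing_from_top}a at a reflex vertex gives a $5$-hole through $s_1$ and a sector point, which contradicts the priority rule; the paper phrases this as ``good forces some $\measuredangle abc>\pi$'', the contrapositive of your ``no reflex vertex and no away points force a bad assignment''. The obstacle you flag in the key step does not arise, and the fallback is unneeded: a chain that is radially monotone around $s$ within an angle less than $\pi$ and has all $\measuredangle p_{i-1}p_ip_{i+1}<\pi$ already bends toward $s$, lies in $\triangle sp_1p_t$ and is in convex position, since any bulge beyond $\overline{p_1p_t}$ would create a reflex vertex at the point farthest beyond that segment.
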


\begin{proof}
    Suppose that some $5$-hole $H$ is involved in $K$ assignments for some positive integer $K$. If $K$ is sufficiently large, then there are two of these $K$ points, say $s_1$ and $s_2$, for which the conditions in Proposition~\ref{prop:second_structure} are satisfied. Notice that, as these points are assigned to $H$ through a good assignment and because the sectors of $H$ with respect to $s_1$ and $s_2$ are the same, there must be three consecutive points (in the radial order with respect to both $s_1$ and $s_2$) in that sector, $a$, $b$ and $c$ (in that order), so that $\measuredangle abc > \pi$. 
    
    Notice that the quadrilaterals $abcs_1$ and $abcs_2$ are $4$-holes. Furthermore, without loss of generality, the pentagon $s_1abcs_2$ is convex. Thus, if $s_2'$ denotes the element of $P$ in $\triangle s_1cs_2$ with smallest non-zero Euclidean distance to $\overleftrightarrow{s_1c}$, the pentagon $s_1abcs_2'$ must be a $5$-hole (this is essentially the same situation that was depicted in Figure~\ref{fig:pushing_from_top}a). This is a contradiction, as $H$ would have not been assigned to $s_1$ if there is a $5$-hole which has $s_1$ and at least one point from the sector of $H$ with respect to $s_1$ among its vertices---$5$-holes of this kind have a higher priority in our assignment process.
\end{proof}

\subsection*{Finding many good assignments} Our last goal in this section will be to show that, under mild assumptions, one can produce many good assignments for a given point in one of the first $k_{\operatorname{mid}}$ layers (Proposition~\ref{thm:concavity_yields_good_hole}).
The key tool which will allow us to achieve this is the following geometric lemma.

\begin{lemma}\label{lem:concavity_yields_good}
    Let $p$ be a point in layer $\ell_i$ and suppose that $p_1,\dots,p_{11}$ are $11$ consecutive points in the radial order of $P_i$ with respect to $p$. If $\measuredangle p_{5}p_6p_7>\pi$, then we can choose a block $B$ of $10$ consecutive points among these $11$ such that assigning a $5$-hole to $p$ using $B$ results in a good assignment.
\end{lemma}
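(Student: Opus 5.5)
The plan is to exploit the freedom of choosing between the two candidate blocks $B'=(p_1,\dots,p_{10})$ and $B''=(p_2,\dots,p_{11})$, together with the priority rules of the refined assignment. If for either block there is a $5$-hole having $p$ and some element of the block as vertices, then the assignment is good by definition and we are done. So we may assume that for both blocks the assigned $5$-hole avoids $p$, lies in the block, and, because good assignments are prioritized, that \emph{every} member of the corresponding family $F$ yields a bad assignment. We will derive a contradiction from this assumption.

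The first step is to unpack what a bad assignment means geometrically. Suppose the sector $Q=(q_1,\dots,q_r)$ of $H$ with respect to $p$ is in convex position and $\operatorname{conv}(Q\cup\{p\})$ is a triangle. Then that triangle must be $\triangle pq_1q_r$, and $q_2,\dots,q_{r-1}$ form a convex chain bulging towards $p$. In particular, for every $1<j<r$ the angle at $q_j$ measured as in the statement is $<\pi$, which is the opposite of the reflex condition $\measuredangle p_5p_6p_7>\pi$. Consequently, the sector of a bad hole cannot contain $p_5,p_6,p_7$ as three consecutive elements. Since sectors are intervals of the radial order, any bad $H$ has a sector that either lies entirely in $p_1,\dots,p_6$ or entirely in $p_6,\dots,p_{11}$. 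The first alternative is where $p_6$ is the last vertex, and the second is where $p_6$ is the first vertex. As the sector has at least $5$ points, it is then one of only a handful of intervals, such as $p_1..p_5$, $p_1..p_6$, $p_2..p_6$, $p_6..p_{10}$, $p_6..p_{11}$, $p_7..p_{11}$, and similar ones.

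The second step rules out these short-sector configurations, and I expect this to be the main obstacle. Suppose, say, that the sector is $p_j,\dots,p_{j+4}$ (or $p_j,\dots,p_{j+5}$) with $j+4\le 6$. Then these points form a convex chain facing $p$, and together with $p$ they bound an empty region. I would try to combine this chain with the reflex vertex at $p_6$ and the points beyond it, $p_7,p_8,\dots$, to build a $5$-hole whose sector straddles $p_6$. There are two ways this can go.
\begin{itemize}
\item If $p_6$ is a vertex of the chain, the reflex angle lets us replace the far end of $H$ by $p_7$ (or by a point obtained via the ``closest to the line'' pushing argument of Figure~\ref{fig:pushing_from_top}). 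This produces a $5$-hole inside the block containing $p_5,p_6,p_7$ consecutively, which gives a good assignment.
\item Otherwise, the pushing argument applied to $\triangle p\,q_1q_r$ yields a $5$-hole having $p$ as a vertex, using that $\triangle p q_iq_{i+1}$ is empty.
\end{itemize}

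Finally, the choice between $B'$ and $B''$ handles the boundary effects: the sectors $p_1..p_5$ or $p_7..p_{11}$ are only available in one of the two blocks. I would check that in whichever block contains six consecutive points on both sides of $p_6$, one of the two constructions above stays inside that block. I expect the delicate part to be verifying emptiness of the constructed pentagon and membership of its vertices in the chosen block. This is why the hypothesis is placed at the middle index $6$ of the $11$ points.
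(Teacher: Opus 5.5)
\textbf{There is a genuine gap: Step~2, which carries the whole lemma, is not carried out, and neither construction you sketch works as stated.}

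Your reduction and Step~1 are sound. A bad sector is a chain all of whose consecutive angles are $<\pi$. So any member of $F$ whose sector contains $p_5,p_6,p_7$ yields a good assignment; the paper relies on exactly this criterion. Likewise, a bad sector must be one of the six short intervals you list.

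Place $p$ far below the points. Then the radial order is the left-to-right order, and $\measuredangle p_{j-1}p_jp_{j+1}>\pi$ means that $p_j$ lies above $\overline{p_{j-1}p_{j+1}}$.

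\emph{First bullet.} Let $p_2,\dots,p_7$ be $(2,4),(3,1),(4,0),(5,\tfrac12),(6,2),(7,-5)$. Then $p_2p_3p_4p_5p_6$ is a bad $5$-hole with $p_6$ as a vertex, and $p_6$ is reflex. But $p_5$ lies above $\overleftrightarrow{p_4p_7}$, so $p_2p_3p_4p_5p_7$ is not even convex. The reflex angle at $p_6$ gives no control over $\measuredangle p_4p_5p_7$. Even when $\measuredangle p_4p_5p_7<\pi$, the point $p_6$ can lie inside $p_2p_3p_4p_5p_7$ (move $p_7$ to $(7,\tfrac52)$).

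\emph{Second bullet.} The point $p$ together with any four points of a bad sector is never a convex pentagon, because the chain bends towards $p$. So the pushing trick, which needs a convex pentagon $p\,q\,a\,b\,r$ with $\triangle pab$ empty, has nothing to act on. Concretely, take $p_j=(j,j^2)$ for $j\neq 6$ and $p_6=(6,1000)$. Then $p_1p_2p_3p_4p_5$ is a bad hole avoiding $p_6$, yet no convex pentagon at all has $p$ as a vertex. There the good assignment comes from $p_2p_3p_4p_5p_7$, which is the kind of construction you reserved for the other case. Conversely, in the first example with, say, $p_8=(8,-10)$, the witness is the $5$-hole $pp_4p_5p_7p_8$ through $p$. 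The two mechanisms are essentially attached to the wrong cases.

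What is missing is a case analysis organized by which indices $j\in\{3,\dots,9\}$ are \emph{sharp} ($\measuredangle p_{j-1}p_jp_{j+1}>\pi$), rather than by the shape of a hypothetical bad hole.

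\emph{If $6$ is the only sharp index}, split on $\measuredangle p_4p_5p_7$ and $\measuredangle p_5p_7p_8$.
If $\measuredangle p_4p_5p_7<\pi$, then $p_2p_3p_4p_5p_7$ is a good $5$-hole unless $p_6$ lies inside it. In that case $\measuredangle p_2p_6p_7<\pi$, and $p_2p_6p_7p_8p_9$ is a good $5$-hole; this uses that $7$ and $8$ are not sharp.
The case $\measuredangle p_5p_7p_8<\pi$ is symmetric.
If both angles exceed $\pi$, then $pp_4p_5p_7p_8$ is a $5$-hole.

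\emph{If some other index, say in $\{3,4,5\}$, is sharp}, take the $5$-hole guaranteed in $\{p,p_1,\dots,p_9\}$. Either it contains $p$; or it is $p_1p_3p_5p_7p_9$, which lies in $F$ because $\triangle pp_5p_7$ is empty; or it has a vertex in $\{p_7,p_8,p_9\}$, so its sector contains $p_5,p_6,p_7$; or its vertices lie in $\{p_1,\dots,p_6\}$. In the last case its sector contains $p_2,\dots,p_5$, hence a sharp triple when $3$ or $4$ is sharp; when $5$ is sharp, use $pp_4p_5p_6p_7$ instead.

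The freedom to choose between the two blocks is needed only for the mirror case, where the extra sharp index lies in $\{7,8,9\}$. There one uses $\{p,p_3,\dots,p_{11}\}$ and the block $p_2,\dots,p_{11}$.
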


\begin{proof}
    For $i\in \{3, 4 ... 9\}$, we say that $i$ is \textit{sharp} if $\measuredangle p_{i-1}p_i p_{i+1} > \pi$. We are given that $6$ is sharp. 
    
    Suppose first that $6$ is the only sharp number and that $\measuredangle p_4 p_5 p_7 < \pi$. Then, the pentagon $p_2p_3p_4p_5p_7$ is convex and the only way it could fail to be empty is if $p_6$ lies inside it. In fact, if $p_6$ is not inside $p_2p_3p_4p_5p_7$, not only is this pentagon a $5$-hole, but it also induces a good assignment, as $p_5$, $p_6$ and $p_7$ are inside the sector of $p_2p_3p_4p_5p_7$ with respect to $p$. If $p_6$ is inside $p_2p_3p_4p_5p_7$, then $\measuredangle p_2p_6p_7 < \pi$, and thus the pentagon $p_2p_6p_7p_8p_9$ is a $5$-hole which also induces a good assignment with respect to $p$ (see Figure~\ref{fig:technical_lemma}a). This concludes the analysis of the case where $\measuredangle p_4 p_5 p_7 < \pi$. Suppose now that $6$ is the only sharp number but $\measuredangle p_4p_5p_7 > \pi$. By symmetry, we can assume that $p_5p_7p_8 > \pi$ too. Notice now that $pp_4p_5p_7p_8$ is a $5$-hole which, naturally, constitutes good assignment with respect to $p$ (see Figure~\ref{fig:technical_lemma}b). This completes the proof for the case where only $6$ is sharp. 
    
    Assume now that there is at least one other sharp number and, without loss of generality, that it is contained in $\{3, 4, 5\}$. Consider any $5$-hole $H$ with vertices in $\{p,p_1,\dots,p_9\}$ (recall that least one such $5$-hole exists, because $h_5(10)=1$). If $H$ contains $p$ we get a good assignment, so we assume that it does not. Suppose $H$ is the pentagon $p_1p_3p_5p_7p_9$. As $\measuredangle p_5p_6p_7 > \pi$, we have that $\triangle pp_5p_7$ is empty. Hence, we can assign $H$ to $p$; this would be a good assignment, since $p_5$, $p_6$ and $p_7$ are within the sector of $H$ with respect to $p$. Suppose then that this is not the $5$-hole $H$. 
    
    There will be some index $1\leq i\leq 8$ such that $p_i$ and $p_{i+1}$ are vertices of the $5$-hole $H$. As $\triangle pp_ip_{i+1}$ is empty, $H$ is a valid assignment with respect to $p$. Furthermore, at least one of the vertices of $H$ is in $\{p_1, p_2, p_3, p_4, p_5\}$. Thus, if at least one vertex of $H$ is in $\{p_7, p_8, p_9\}$, then the $3$ points $p_5$, $p_6$, $p_7$ would belong to the sector of $H$ with respect to $p$, and this would be a good assignment. Suppose then that the vertices of $H$ are contained in $\{p_1,\dots,p_6\}$. If $5$ were sharp, then $pp_4p_5p_6p_7$ would be a $5$-hole which constitutes a good assignment with respect to $p$, so we assume that one of $\{3,4\}$ is sharp. The leftmost vertex of $H$ is either $p_1$ or $p_2$, and its rightmost vertex is either $p_5$ or $p_6$. In any case, all points $p_2$, $p_3$, $p_4$, $p_5$ are in the angular region of $H$ with respect to $p$, so $H$ yields a good assignment with respect to $p$.
    
    In summary, we can always find a good assignment under the lemma's assumptions.
\end{proof}

\begin{figure}[h]
    \centering
    \includegraphics[width=.75\linewidth]{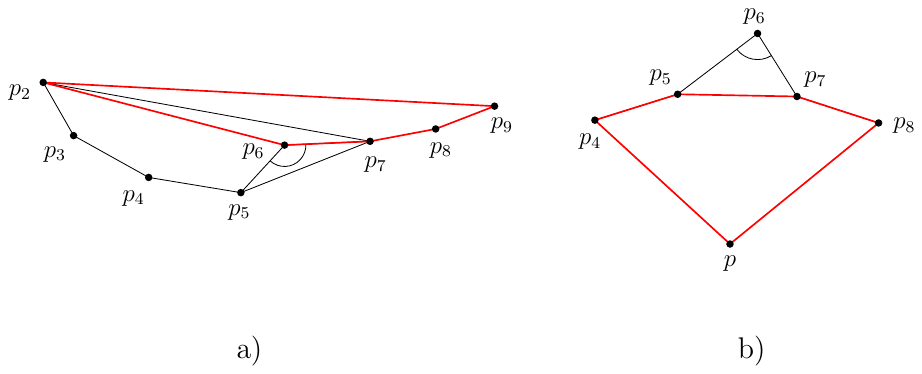}
    \caption{a) In the case where $6$ is the only sharp number and $\measuredangle p_4 p_5 p_7 < \pi$, if $p_6$ lies within the convex pentagon $p_2p_3p_4p_5p_7$, then $p_2p_6p_7p_8p_9$ (red) is a $5$-hole which induces a good assignment. b)  If $6$ is the only sharp number but $\measuredangle p_4 p_5 p_7 > \pi$ and $\measuredangle p_5p_7p_8>\pi$, we have that $pp_4p_5p_7p_8$ (red) is a $5$-hole. } 
    \label{fig:technical_lemma}
\end{figure}

As a straightforward corollary of the above lemma, we have the following.

\begin{proposition}\label{thm:concavity_yields_good_hole}
    Let $p$ be a point in layer $\ell_i$ for some $i\leq k_{\operatorname{mid}}$ and consider the radial order $p_1,\dots,p_m$ of all other elements of $P_i$ with respect to $p$. Then, we can choose $\Omega(m)$ pairwise-disjoint blocks (each consisting of $10$ consecutive points from $p_1,\dots,p_m$) such that, after carrying out the assignment of $5$-holes, the number of good assignments involving $p$ is as large as possible among all such collections of disjoint blocks. Moreover, if $r$ and $t$ satisfy $1\leq r\leq t-10$, $t\leq m$ and none of the points $p_r,p_{r+1},\dots,p_{t}$ are contained in a block which induces a good assignment, then $\measuredangle p_{j-1}p_{j}p_{j+1}<\pi$ for all $j$ with $r+5\leq j\leq t-5$. 
\end{proposition}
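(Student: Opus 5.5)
The plan is to fix $p$ and treat block selection as an optimization over a finite family. Consider all collections of pairwise-disjoint blocks of $10$ consecutive points in $p_1,\dots,p_m$ that contain at least $\lfloor m/20\rfloor$ blocks; the standard partition $(p_1,\dots,p_{10}),(p_{11},\dots,p_{20}),\dots$ shows this family is nonempty. The family is finite, so I choose a collection in it maximizing the number of good assignments involving $p$. (If the intended maximum is over all disjoint collections, I first take an optimal collection. I then pad it with further disjoint blocks placed greedily in the gaps until $\Omega(m)$ blocks exist. Adding blocks never destroys existing assignments, since each block is assigned independently, so the count of good assignments does not decrease.) Either way the first assertion follows.

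For the \emph{moreover} part I argue by contradiction, using a local exchange together with Lemma~\ref{lem:concavity_yields_good}. Suppose $p_r,\dots,p_t$ lie in no block inducing a good assignment, and that some $j$ with $r+5\le j\le t-5$ has $\measuredangle p_{j-1}p_jp_{j+1}>\pi$. Apply Lemma~\ref{lem:concavity_yields_good} to the eleven consecutive points $p_{j-5},\dots,p_{j+5}$, all of which lie in $\{p_r,\dots,p_t\}$. Their middle point is $p_j$, so the lemma gives a block $B\subseteq\{p_{j-5},\dots,p_{j+5}\}$ that yields a good assignment.

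Next I modify the chosen collection. Remove every block that meets $B$; each such block meets $\{p_r,\dots,p_t\}$. Then insert $B$. The removed blocks induced no good assignments: any block meeting $p_r,\dots,p_t$ contains one of these points, and by hypothesis that point is in no good-inducing block. So removing them loses nothing, and adding $B$ gains one good assignment. This contradicts maximality.

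The delicate point is that the new collection must remain admissible, that is, still contain at least $\lfloor m/20\rfloor$ blocks. Since $B$ is an interval of $10$ points, it meets at most two old blocks, so the count drops by at most one. To handle this, I will set the threshold slightly below what the greedy construction achieves. For example, take the family of collections with at least $\lfloor m/20\rfloor$ blocks, and note that the standard partition has about $m/10$ blocks. If the optimum sits exactly at the threshold, the exchange could leave it one short. In that case I add a new block in the freed space: removing two blocks and inserting $B$ frees at least $10$ consecutive unused points, provided the removed blocks were disjoint from $B$'s complement in the right way. I expect this bookkeeping, namely ensuring the exchange never leaves the collection short, to be the only real obstacle. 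A cleaner route is to choose the optimum lexicographically: first maximize the number of good assignments, then maximize the number of blocks. Under that choice the exchange argument only has to compare good-assignment counts, and I would simply require the final collection to have $\Omega(m)$ blocks via the padding step above.
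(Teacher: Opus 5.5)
Your proof is correct once you commit to taking the maximum over \emph{all} collections of pairwise-disjoint blocks (your parenthetical route, or the lexicographic choice). It differs from the paper in where the optimization happens.

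\textbf{The paper's route.} The paper first chooses a largest collection consisting \emph{only} of blocks that induce good assignments. For the \emph{moreover} part, the block $B\subseteq\{p_{j-5},\dots,p_{j+5}\}$ supplied by Lemma~\ref{lem:concavity_yields_good} is then automatically disjoint from every chosen block. So it can simply be added, and no exchange is needed. Only afterwards are the gaps filled with filler blocks, which leaves the number of good assignments unchanged.

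\textbf{Your route.} You maximize the number of good assignments over arbitrary collections, mixing good and non-good blocks, and you prove the \emph{moreover} part by an exchange. Every block meeting $B$ contains a point of $\{p_r,\dots,p_t\}$, so it is not good, and trading these blocks for $B$ gains one good assignment. This uses the fact that whether a block induces a good assignment depends only on $p$ and the block itself. That fact does hold for the priority rule of the refined assignment, but you should state it.

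\textbf{Comparison.} Your formulation makes explicit the exchange form of maximality that the paper later relies on in Lemma~\ref{lem:visible_implies_convex}, where the added block may overlap filler blocks. It also removes any ambiguity between a maximal and a maximum collection in the first assertion. The paper's version is slightly shorter. The padding count you left implicit is easy: after padding, every gap has at most $9$ points, and there are at most $N+1$ gaps for $N$ blocks. Hence $m\le 19N+9$, so $N=\Omega(m)$.

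\textbf{Drop your first variant.} Maximizing only over collections with at least $\lfloor m/20\rfloor$ blocks gives a weaker maximality than the statement and its later use require. Your proposed repair also fails. Suppose $B$ consists of the last five points of one old block and the first five points of the next, with no slack on either side. Then the exchange frees no run of $10$ consecutive unused points, so you cannot restore the block count.
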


\begin{proof}
We start by selecting a collection of pairwise-disjoint blocks of size $10$ which induce only good assignments and is as large as possible with this property. By Lemma~\ref{lem:concavity_yields_good}, for every $6\leq j\leq m-5$ such that $\measuredangle p_{j-1}p_jp_{j+1}>\pi$, at least one of the points in $\{p_{j-5},\dots,p_{j+5}\}$ must be contained in some block of this collection. Indeed, we could otherwise add at least one more block which is disjoint from the rest and induces a good assignment. This directly yields the property stated in the last sentence of the lemma.

To conclude, for every maximal string of consecutive points $p_{r},p_{r+1},\dots,p_{t}$ which do not belong to any of the blocks in the above family, we create $\lfloor (t-r+1)/10\rfloor$ pairwise-disjoint blocks consisting of $10$ consecutive points each, and add them to our initial collection of blocks. This clearly ensures that the total number of blocks is at least $\Omega(m)$, while preserving the number of good assignments.
\end{proof}

\section{Proof of the main theorem}\label{sec:main}
For every point $p$ in one of the first $k_{\operatorname{mid}}$ layers of $P$, we denote by $g_{p}$ the number of good assignments produced by applying Proposition~\ref{thm:concavity_yields_good_hole} to $p$. Then, applying the said proposition to all such points, it is possible to simultaneously produce
\begin{equation}\label{eq:1}
\sum_{p\in \ell_1\cup\dots\cup\ell_{k_{\operatorname{mid}}}}g_p\,
\end{equation} 
good assignments of $5$-holes to points. By Claim~\ref{claim:few_good_assignments}, up to a multiplicative constant, the above expression acts also as a lower bound for the number of $5$-holes induced by $P$. We can assume from now on that there are at least $\Omega(n)$ point $p$ in the first $k_{\operatorname{mid}}$ layers for which $g_p=o(n)$, or else we would immediately deduce that there are at least $\Omega(n^2)$ $5$-holes. 

On the other hand, in the setting of Proposition~\ref{thm:concavity_yields_good_hole}, the points $p_1,\dots,p_m$ can be partitioned into at most (say) $100g_p$ sets each of which consists of consecutive points---in the radial order---and is the vertex set of an empty convex polygon. Every $5$-hole all of whose vertices belong to the same convex polygon of this kind will be said to be \textit{visible from} $p$. A $t$-sided empty convex polygon gives rise to $\binom{t}{5}$ $5$-holes, so $P$ induces at least \begin{equation}\label{eq:2}100g_p\binom{\lfloor m/(100g_p)\rfloor}{5}\,\end{equation} $5$-holes which are visible from $p$. The above expression is of order $\Omega(m^5/g_p^4)=\Omega(n^5/g_p^4)$ as long as $m/(100g_p)\geq 5$. Repeating this argument over all points in the first $k$ layers (and using the fact that $g_p=o(n)$ for some $p$), we conclude that there are at least
\begin{equation}\label{eq:3}
\Omega\left(\max_{p\in \ell_1\cup\dots\cup\ell_{k_{\operatorname{mid}}}} \frac{n^5}{g_p^4}\right)
\end{equation} 
$5$-holes in $P$. 

These two lower bounds (i.e.,~\eqref{eq:1} and~\eqref{eq:3}) together can easily be seen to imply that the number of $5$-holes determined by $P$ is at least $\Omega(n^{9/5})$. Note that the second of the two lower bounds seems quite wasteful however, as we are simply taking a maximum and ignoring the possibility that many of the $5$-holes found as $p$ ranges over $\ell_1\cup\dots\cup\ell_{k_{\operatorname{mid}}}$ are different. In order to prove Theorem~\ref{thm:main}, we will improve upon this part of our argument by using one last geometric lemma.

A $5$-hole $H$ that is visible from $p$ will be called \textit{long} with respect to $p$ if, in the radial order with respect to $p$, any two vertices of $H$ are at distance at least $10$ from each other (by which we mean that there are at least $9$ other points between them). Note that, reasoning as we did when deriving $\eqref{eq:2}$, one concludes that, as long as $m/(100g_p)> 40$, the number of $5$-holes which are visible and long with respect to $p$ is at least $\Omega(n^{5}/g_{p}^4)$. Hence, the number of pairs $(H,p)$ where $H$ is a $5$-hole and $p\in \ell_1\cup\dots\cup\ell_{k_{\operatorname{mid}}}$ is such that $H$ is visible and long with respect to the point $p$ must be at least 
\begin{equation}\label{eq:4}
    \Omega\left(\sum_{p\in \ell_1\cup\dots\cup\ell_{k_{\operatorname{mid}}}} \frac{n^5}{g_p^4}\right)\,.
\end{equation}
Here, we are implicitly using the fact that $g_p=o(n)$ for linearly many points $p$.

\begin{lemma}\label{lem:visible_implies_convex}
 For every $5$-hole $H$, the set of vertices with respect to which $H$ is visible and long can be divided into at most $5$ subsets all of which are in convex position.
\end{lemma}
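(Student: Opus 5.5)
Here is the plan. Fix a $5$-hole $H$ with vertices $h_1,\dots,h_5$ in convex position, and let $V$ be the set of points $p$ (in the first $k_{\operatorname{mid}}$ layers) with respect to which $H$ is visible and long. I will partition $V$ into at most $5$ classes according to which edge of $H$ faces $p$, and then show that each class is in convex position.

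First I examine what visibility forces. If $H$ is visible from $p\in\ell_i$, all vertices of $H$ lie on a single empty convex polygon $C_p$, whose vertices are consecutive points of $P_i$ in the radial order around $p$. Two facts follow.
\begin{itemize}
\item $p\notin H$, since $p$ is not in $P_i\setminus\{p\}$.
\item Between any two radially consecutive vertices of $H$ there are at least $9$ points of $P_i$, all of which are vertices of $C_p$.
\end{itemize}
Since $C_p$ is convex and its vertices are radially sorted around $p$, $C_p$ is a convex chain seen from $p$ with every vertex extreme. Consider the first and last vertices $h_f,h_l$ of $H$ in the radial order around $p$. The edge $\overline{h_fh_l}$ of $H$ is the one facing $p$, so $p$ lies on the opposite side of $\overleftrightarrow{h_fh_l}$ from $H$. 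I expect the long condition to rule out the other configuration: the $\ge 9$ intermediate chain points make $C_p$ bulge, so $H$ sits on the far side of the chain from $p$ and the facing edge is an edge of $H$ and not a diagonal. I assign $p$ to the class of this edge, which gives at most $5$ classes.

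Next I show that one class is in convex position. Take the class of the edge $e=\overline{h_1h_5}$, where $h_1,\dots,h_5$ is the order along $H$. Suppose some $p$ in the class lies in the interior of the convex hull of the class, hence inside a triangle $\triangle p'p''p'''$ of three other points of the class. The key observation is that for each $q$ in the class, the region between $q$ and $H$ (more precisely $\operatorname{conv}(\{q\}\cup H)$ minus $H$, or at least the part of the sector near $H$) contains the many chain points of $C_q$, and these all lie on the near side of $e$. I would argue that one of $p',p'',p'''$, say $p'$, sees $H$ through a region containing $p$. More precisely, $p\in\operatorname{conv}(\{p'\}\cup H)$ and $p$ lies in the sector of $H$ with respect to $p'$. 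In that case $p$ would have to be a point of $C_{p'}$ strictly radially between two vertices of $H$. This contradicts either the emptiness of $C_{p'}$ or the fact that the vertices of $C_{p'}$ are in convex position and consecutive. Alternatively, the $\ge 9$ chain points between consecutive vertices of $H$ for $p$ would violate the convexity of $C_{p'}$ around $p$.

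The main obstacle is this last geometric step: turning ``$p$ lies inside the hull of the class'' into ``$p$ lies in the forbidden region of some other $p'$''. I would first try to mimic Lemma~\ref{lem:assigned_points_convex}. Since every point of the class lies beyond the same line $\overleftrightarrow{h_1h_5}$, the rays $\overrightarrow{ph_1}$ and $\overrightarrow{ph_5}$ from an interior point $p$ must exit the hull $R$ of the class through the same side $\overline{qr}$. Otherwise some vertex of $R$ lies in $\operatorname{conv}(\{p,h_1,h_5\})$, and hence in $C_p$'s sector, which is impossible because $C_p$ is empty and convex. Then $q$ and $r$ flank $p$, and the long chain of $C_p$ between $h_1$ and $h_5$ is contradicted by the convexity of $C_q$ or $C_r$, or the empty-polygon structure yields a clash as in Figure~\ref{fig:convex_arc_partition}. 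Making this contradiction precise, possibly after a further constant subdivision of the classes, is where most of the work lies.
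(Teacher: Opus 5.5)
\textbf{There is a genuine gap, and it is exactly the step you flag as the main obstacle.} You treat visibility as a purely geometric property (the vertices of $H$ lie on an empty convex chain of radially consecutive points of $P_i$). You then hope to contradict emptiness or convexity of $C_{p'}$, $C_q$ or $C_r$.

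No such contradiction exists. Take many points on the parabola $y=x^2$, $|x|\le 1$. Let $H$ consist of five of them with $|x|\le 0.2$ and at least nine arc points between consecutive ones. Add $s_1=(-1.5,-8)$, $s_2=(1.5,-8)$, $s_3=(-3,-20)$, $s_4=(3,-20)$ and $s=(0.001,-10)$. Each of these five points sees the arc over $|x|\le 0.2$ as radially consecutive points of its own layer, forming a convex chain. All five fall into the same one of your classes. Yet $s$ lies in the interior of $\operatorname{conv}\{s_1,\dots,s_4\}$, and $s\notin\operatorname{conv}(\{s_i\}\cup H)$ for every $i$, so your ``key observation'' also fails.

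\textbf{The missing ingredient is the optimality of the block selection in Proposition~\ref{thm:concavity_yields_good_hole}.} Visible polygons consist of points lying in no good block, and the number of good assignments for $s$ was maximized. The paper argues as follows:
\begin{enumerate}
\item Show that $b=p_w,\dots,p_z=d$ are radially consecutive around every $s'$ in the class. Hence none of them is in the class, and $\triangle sp_wp_{w+1}$ is empty.
\item If $s$ is interior to the hull $R$ of the class, then $\overline{sp_w}$ and $\overline{sp_{w+1}}$ cross the same side $\overline{s_1s_2}$ of $R$.
\item The usual trick gives a $5$-hole $ss_1'p_wp_{w+1}s_2'$ with $s$ as a vertex.
\item So $\{p_w,\dots,p_{w+9}\}$ could be added as an extra good block for $s$, contradicting maximality.
\end{enumerate}
The long condition ($w\ge 11$, $z-w\ge 20$, $r-z\ge 10$) is what lets this block fit without disturbing the other good blocks. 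In the example above, $s\,s_1\,c_j\,c_{j+1}\,s_2$ is a $5$-hole for consecutive arc points $c_j,c_{j+1}$. So an optimal block choice for $s$ covers the arc with good blocks, and $H$ is not visible from $s$ in the paper's sense.

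\textbf{Your geometric picture is also reversed.} Visible polygons are stretches where $\measuredangle p_{j-1}p_jp_{j+1}<\pi$, i.e.\ $p_j\in\triangle pp_{j-1}p_{j+1}$. So the chain bulges \emph{towards} $p$. Consequently $\overline{h_fh_l}$ is the unique edge of $H$ that $p$ does \emph{not} see, and $p$ lies on the same side of $\overleftrightarrow{h_fh_l}$ as $H$. That $\overline{h_fh_l}$ is an edge at all is automatic: the vertices of $H$ inherit the boundary order of $C_p$, which is the radial order. The long condition plays no role there.

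Your partition into five classes by this edge is nonetheless valid. Each class lies on the side away from $H$ of the other four edge lines and on the $H$ side of $\overleftrightarrow{h_fh_l}$, which fixes its side of all ten lines through two vertices of $H$.

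However, $\triangle ph_1h_5$ contains $H$ and the whole chain of $C_p$ between $h_1$ and $h_5$, so it is far from empty. The Lemma~\ref{lem:assigned_points_convex}-type construction therefore cannot be run with $h_1,h_5$ as anchors. This is why the paper works with the empty triangle spanned by $s$ and two radially consecutive chain points $p_w,p_{w+1}$.
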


\begin{proof}
    Let $S$ be the set of points with respect to which $H$ is visible and long. Any $10$ lines split the plane into at most $56$ regions, so we can partition $H$ into at most $56$ subsets so that, for every two distinct vertices $a$ and $b$ of $H$, any two elements in the same subset of the partition must lie on the same side of $\overleftrightarrow{ab}$. We show that each of these subsets of $S$ is in convex position. 

    Let $S'$ be one of these subset of $S$. For starters, any two elements of $S'$ see the vertices of $H$ in the same order, say, $a,b,c,d,e$. Pick some $s\in S'$ and suppose that $s\in \ell_i$. Let $p_1,\dots,p_r$ be the minimal string of consecutive elements in the radial order of $P_i$ with respect to $s$ which contains all vertices of $H$. Note, in particular, that $p_1=a$ and $p_r=e$; choose also $w,z\in\mathbb \{1,\dots,r\}$ such that $b=p_w$ and $d=p_z$. Since $H$ is long with respect to $s$, it must be the case that $w\geq 11$, $r-z\geq 10$ and $z-w\geq 20$. 
    
    Note that $p_1\dots p_r$ is a convex polygon, in that order, due to the how visible $5$-holes are defined. Moreover, since $H$ is visible from $s$, $\measuredangle sba<\pi$ and $\measuredangle eds<\pi$. Consider some $s'\in S$ and suppose that $s'\in \ell_j$. Given that every element of $S'$ lies on the same side as $s$ with respect to every line determined by two vertices of $H$, we must have that $\measuredangle s'ba<\pi$ and $\measuredangle eds'<\pi$ as well. This forces $b=p_w,p_{w+1},\dots,p_z=d$ to appear in this order in the radial order of $P_j$ with respect to $s'$ too. Furthermore, in this radial order, those vertices which belong to the interval between $p_w=b$ and $p_z=d$ (inclusive) are in convex position, since $H$ is also visible from $s'$. It is easy to see that this is possible only if $b=p_w,p_{w+1},\dots,p_z=d$ are consecutive in the radial order around $s'$. See Figure~\ref{fig:long_holes}a. As this holds for every $s'\in S'$, in particular, we get that no element of $\{p_w,p_{w+1}\dots,p_z\}$ is in $S'$.

    If there were two points $s\in S'$ and $p\in P$ for which $p$ is in the interior of $\triangle sp_{w}p_{w+1}$, $p$ would come after $p_w$ but before $p_{w+1}$ in the radial order with respect to $s$; this is impossible, as it contradicts the conclusion we reached in the previous paragraph. Suppose now that $S'$ is not in convex position. Then, there exists some point $s\in S'$ which belongs to the interior of the convex hull $R$ of $S'$. The proof now basically follows the proof of Lemma~\ref{lem:assigned_points_convex}. Indeed, the segments $\overline{sp_w}$ and $\overline{sp_{w+1}}$ must cross the same side of $R$, or else there would be some $s'\in S'$ in the interior of $\triangle sp_wp_{w+1}$. Let $\overline{s_1s_2}$ be the side of $R$ that is crossed by these two segments. Then, $ss_1p_wp_{w+1}s_2$ is a convex pentagon, in that order, and $\triangle sp_wp_{w+1}$ is empty. By our usual trick, we can find some $5$-hole of the form $ss_1'p_wp_{w+1}s_2'$ in $P$. See Figure~\ref{fig:long_holes}b. We could have added the block $B=\{p_w,p_{w+1},\dots,p_{w+9}\}$ to the collection of blocks corresponding to $z$. This block would induce a good assignment of a $5$-hole to $s$. Moreover, since $w\geq 11$, $r-z\geq 10$ and $z-w\geq 20$, the addition of this block would not interfere with any of the other blocks inducing good assignments for $s$. This is a contradiction, since we assumed that the construction of the blocks was such that the number of good assignments for $s$ was as large as possible. Hence, $S'$ is in convex position, and we are done.
\end{proof}

\begin{figure}[h]
    \centering
    \includegraphics[width=.63\linewidth]{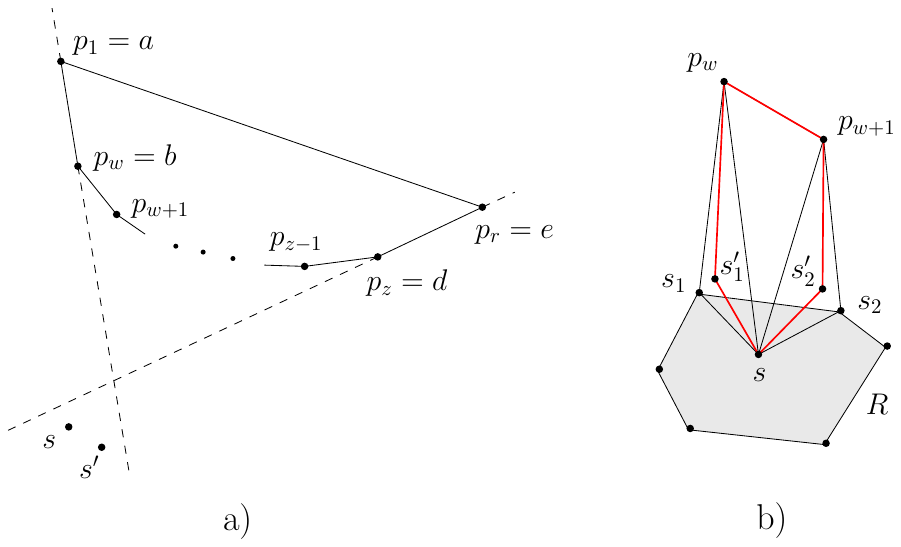}
    \caption{a) The points $p_1,p_w,p_{w+1},\dots,p_{z-1},p_{z},p_r$ appear in this order around $s$, and are in convex position. The point $s'\in \ell_j$ lies on the same side of $\overleftrightarrow{ab}$ and $\overleftrightarrow{de}$ as $s$, and thus must also see the above points in the same order. Using that $H$ is visible from $s'$, we can conclude that $p_w,p_{w+1},\dots,p_{z-1},p_{z}$ are in fact consecutive in the radial order of $P_j$ with respect to $s'$. b) The convex hull $R$ of $S'$ has been shaded. The segments $\overline{sp_w}$ and $\overline{sp_{w+1}}$ both intersect the same side $\overline{s_1s_2}$ of $R$. The triangle $\triangle sp_{w}p_{w+1}$ is empty, so we can find a $5$-hole of the form $ss_1'p_wp_{w+1}s_2'$ (shown in red). As always, it could happen that $s_1=s_1
    $ and/or $s_2=s_2'$.} 
    \label{fig:long_holes}
\end{figure}

\begin{proof}[Proof of Theorem\ref{thm:main}]
    If there is some $5$-hole which is visible and long with respect to at least $n^{10/11}$ points, then Lemma~\ref{lem:visible_implies_convex} tells us that there must be a $\Omega(n^{10/11})$-sided convex polygon with vertices in $P$. The second part of Theorem~\ref{thm:layers} would then imply the desired bound (as occurred during the proof of Theorem~\ref{thm:preliminary_bound}). Assume, thus, that every $5$-hole is visible and long from less than $n^{10/11}$ points.

    If $\sum_{p\in \ell_1\cup\dots\cup\ell_{k_{\operatorname{mid}}}} g_p\geq n^{20/11}$, then, by~\eqref{eq:1}, $P$ induces at least $\Omega(n^{20/11})$ $5$-holes. Hence, we can also assume that $\sum_{p\in \ell_1\cup\dots\cup\ell_{k_{\operatorname{mid}}}} g_p< n^{20/11}$. 

    As we noted above, the expression in~\eqref{eq:4} lower bounds the number of pairs $(p,H)$ with $H$ visible and long with respect to $p$. Applying Jensen's inequality together with $\sum_{p\in \ell_1\cup\dots\cup\ell_{k_{\operatorname{mid}}}} g_p\leq n^{20/11}$, we conclude that the number of such pairs is in turn lower bounded by \[\Omega\left(n\cdot\frac{n^5}{(n^{20/11}\cdot n^{-1})^{4}}\right)=\Omega(n^{30/11})\,.\] Lastly, by our first assumption, every $5$-hole belongs to less than $n^{10/11}$ pairs of this kind, so the number of distinct $5$-holes induced by $P$ must be at least $\Omega(n^{20/11})$, as promised.
\end{proof}

\bibliographystyle{plain}
\bibliography{refs}

\end{document}